\newtheorem{theorem}{Theorem}
\newtheorem{lemma}[theorem]{Lemma}
\newtheorem{corollary}[theorem]{Corollary}
\newtheorem{remark}[theorem]{Remark}
\newcommand{\R}{\mathbb{R}}
\newcommand{\Ima}{\mbox{Im }}
\newcommand{\rank}{\mbox{rank }}
\newcommand{\po}{{\hspace*{-1ex}}{\bf .  }}
\newcommand{\End}{\mbox{End}}
\newcommand{\trace}{\mbox{tr\,}}
\def\<{{\langle}}
\def\>{{\rangle}}
\def\P{{\cal P}}
\def\bea{\begin{eqnarray*} }
\def\eea{\end{eqnarray*} }
\def\be{\begin{equation} }
\def\ee{\end{equation} }
\def\Ima{Im}
\newcommand{\Address}{{
  \bigskip
  \footnotesize

 \textsc{IMPA, Estrada Dona Castorina, 110, Rio de Janeiro, Brazil 22460-320}\par\nopagebreak
  \emph{E-mail address}: \texttt{mibieta@impa.br}
}}
\def\proof{\noindent\emph{Proof: }}
\def\qed{\ifhmode\unskip\nobreak\fi\ifmmode\ifinner
\else\hskip5 pt \fi\fi\hbox{\hskip5 pt \vrule width4 pt
height6 pt  depth1.5 pt \hskip 1pt }}
\begin{document}

\title{Infinitesimal bendings of complete Euclidean hypersurfaces}
\author{Miguel Ibieta Jimenez}
\date{}

\maketitle

\begin{abstract} A local description of the non-flat infinitesimally bendable Euclidean hypersurfaces was recently given by Dajczer and Vlachos \cite{DaVl}. From their classification,  it follows that there is an abundance of infinitesimally bendable hypersurfaces
that are not isometrically bendable.
In this paper we consider the case  of complete hypersurfaces $f\colon M^n\to\R^{n+1}$, $n\geq 4$.
If there is no open subset where $f$ is either totally geodesic or a cylinder over an unbounded hypersurface of $\R^4$,  we prove that $f$ is infinitesimally bendable only along ruled strips. In particular, if the hypersurface is simply connected, this implies that any infinitesimal bending of $f$ is the variational field of an isometric bending. 
\end{abstract}

Given an isometric immersion of a Riemannian manifold $M^n$, $n\geq3$, into Euclidean space $\R^{n+1}$
a natural question is whether there exist, locally or globally, other isometric immersions, apart from compositions with rigid motions of $\R^{n+1}$. 
If such immersions exist they are called isometric deformations. In the absence of isometric deformations the submanifold is said to be \emph{rigid}.

In the  local case, the classical Beez-Killing theorem states that a hypersurface having at least three nonzero principal curvatures at any point is rigid. 
On the other hand, any hypersurface whose second fundamental form has rank at most one is flat, and hence highly isometrically deformable. 
The remaining case is when the second fundamental form has constant rank two.  In that case, 
the ones that admit isometric deformations were parametrically described
by Sbrana \cite{Sb1} and Cartan \cite{Ca1}.  
A modern presentation of these alternative descriptions, as well as further results, can be found in \cite{DaFlTo}.

An \emph{isometric bending} of an isometric immersion $f\colon M^n\to\R^{n+1}$ is a smooth map 
$$
F\colon(-\epsilon,\epsilon)\times M^n\to \R^{n+1}
$$ 
such that $F(0,\cdot)=f(\cdot)$ and $F(t,\cdot)$ is an isometric immersion for each $t\in(-\epsilon,\epsilon)$. 

A way to construct an isometric bending is to compose $f$ with a smooth one-parameter family of rigid motions in $\R^{n+1}$, that is, 
\begin{equation*}
F(t,x)=C(t)f(x)+v(t)
\end{equation*}
where $C(t)$ is an orthogonal transformation of $\R^{n+1}$ and $v(t)\in\R^{n+1}$ for each $t\in(-\epsilon,\epsilon)$. 

An isometric bending of $f$ given by the expression above is said to be \emph{trivial}. If $f$ admits a non-trivial isometric bending then it is called \emph{isometrically bendable}. Otherwise, $f$ is said to be  \emph{isometrically unbendable}.

In the global case, Sacksteder \cite{Sa} proved that any compact Euclidean hypersurface in $\R^{n+1}$ is isometrically unbendable. 
Dajczer and Gromoll \cite{DaGr} showed that a complete hypersurface in $\R^{n+1}$ is isometrically bendable only along ruled strips, 
provided that it does not contain an open subset that is a cylinder over an unbounded hypersurface in $\R^{4}$.
Recall that
a hypersurface is said to be \emph{ruled} if it admits a foliation of codimension one whose leaves (rulings) are part of affine subspaces of $\R^{n+1}$.  For a hypersurface with boundary we say it is ruled if, in addition to the previous condition, 
the rulings are tangent to the boundary. When the rulings are all complete we say the hypersurface is completely ruled and any of its connected components is called a \emph{ruled strip}.
Dajczer and Gromoll \cite{DaGr} proved that any simply connected ruled strip without flat points, that does not contain a cylinder over a surface in $\R^3$, admits only ruled isometric deformations with the same rulings. 
Moreover, these deformations are in one to one correspondence with the smooth functions on an open interval. 

A weaker notion of bending of $f\colon M^n\to\R^{n+1}$ is associated to variations of $f$ by hypersurfaces whose induced metrics are equal up to the first order. A precise definition of this notion is given next.

Let $\mathfrak{X}(M)$ denote the set of tangent vector fields of $M^{n}$ and $\Gamma(E)$ the sections of a bundle $E$ over $M^n$. Let $F$ be an isometric bending of $f$ and let $\tau(x)=F_{*}\partial / \partial t|_{t=0}(x)\in\Gamma(f^{*}T\R^{n+1})$ be the associated variational field.
Then $\tau$ satisfies
\begin{equation}\label{infben}
\<\tilde{\nabla}_{X}\tau,f_{*}Y\>+\<\tilde{\nabla}_{Y}\tau,f_{*}X\>=0
\end{equation}
for any $X,Y\in \mathfrak{X}(M),$ where $\tilde{\nabla}$ denotes the connection of the Euclidean space. 

A vector field $\tau\in\Gamma(f^{*}T\R^{n+1})$ satisfying \eqref{infben} is called an \emph{infinitesimal bending} of the immersion $f$. To any infinitesimal bending $\tau$ of $f$ we always associate the variation 
\begin{equation}\label{variation}
F(t,x)=f_t(x)=f(x)+t\tau(x),\;\;t\in\R,
\end{equation}
that has $\tau$ as variational vector field. Then, the maps $f_t$ are immersions since 
\begin{equation}\label{metrica t}
\< f_{t*}X,f_{t*}Y\>=\< f_{*}X,f_{*}Y\> +t^{2}\< \tilde{\nabla}_{X}\tau,\tilde{\nabla}_{Y}\tau\>
\end{equation}
for all $X,Y\in\mathfrak{X}(M)$ and $t\in\R$.

Clearly any isometric bending determines an infinitesimal one. In this way, infinitesimal bendings given by trivial isometric bendings are said to be \emph{trivial} infinitesimal bendings. More precisely, a trivial infinitesimal bending $\tau$ has the form
$$
\tau(x)={\cal D}f(x)+w
$$
where ${\cal D}$ is a skew-symmetric linear endomorphism of $\R^{n+1}$ and $w\in\R^{n+1}$. In the sequel, we always identify two infinitesimal bendings $\tau_{1}$ and $\tau_{2}$ if $\tau_{1}-\tau_{2}$ is trivial.

The problem of existence of non-trivial infinitesimal bendings of an isometric immersion, gives rise to another notion of rigidity.   
Namely, an isometric immersion is said to be \emph{infinitesimally rigid} if it only admits trivial infinitesimal bendings. Otherwise, we say that the submanifold  is \emph{infinitesimally bendable}. 

For simplicity, from now on  we refer to the rank of the  second fundamental form of an isometric immersion $f\colon M^n\to\R^{n+1}$ as the rank of the immersion $f$. 
It is a classical result that hypersurfaces of rank at least three at every point are infinitesimally rigid; see \cite{DaRo}. 
The case of constant rank two was studied by Dajczer and Vlachos \cite{DaVl}. They completed the work of Sbrana \cite{Sb2} describing locally all hypersurfaces admitting non-trivial infinitesimal bendings. It turns out that the class of infinitesimally bendable hypersurfaces is much larger than that of isometrically bendable ones. 
From \cite{DaVl} we also have that infinitesimal bendings of cylinders of rank 2 are determined by infinitesimal bendings of the non-flat factor. 
 
\medskip
In this paper, we consider the class of complete Euclidean hypersurfaces that carry a non-trivial infinitesimal bending.
Using facts from \cite{DaVl} and inspired by Theorem $3.4$ in \cite{DaGr}, we prove the following result.

\begin{theorem}\po\label{main}
Let $f\colon M^n\to\R^{n+1}$, $n\geq 4$, be an isometric immersion of a complete Riemannian manifold. Assume that there is no open subset of $M^n$ where $f$ is either totally geodesic or a cylinder over a hypersurface in $\R^{4}$ with complete one-dimensional leaves of relative nullity. Then $f$ admits non-trivial infinitesimal bendings only along ruled strips.
\end{theorem}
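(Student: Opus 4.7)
The plan is to combine the local classification of Dajczer--Vlachos \cite{DaVl} with a global completeness argument in the spirit of Theorem~3.4 of \cite{DaGr}, reducing first to the case of constant rank two.

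First, let $\tau$ be a non-trivial infinitesimal bending of $f$. By the rigidity result of \cite{DaRo}, at every point where the rank of $f$ is at least three every infinitesimal bending is trivial, so the non-trivial part of $\tau$ is supported on the open set where $f$ has rank at most two. The hypothesis excludes open subsets of rank zero; on the open subset where the rank equals one the hypersurface is flat and its $(n-1)$-dimensional relative nullity foliation already exhibits it as ruled, with complete rulings because $M^n$ is complete. The real content of the theorem therefore concerns the open set $U\subset M^n$ on which $f$ has rank exactly two.

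Second, I would apply the Dajczer--Vlachos local parametric description on $U$. Their classification separates the ruled germs, which already yield the desired conclusion, from the germs of Sbrana--Cartan type and the additional purely infinitesimal type introduced in \cite{DaVl}. On $U$ the relative nullity distribution $\Delta$ is smooth of dimension $n-2$, totally geodesic in $M^n$, and mapped by $f$ into affine $(n-2)$-planes of $\R^{n+1}$. Standard completeness arguments (following Ferus) show that any complete leaf of $\Delta$ either lies entirely in $U$ or exits through the rank-drop boundary; this globalization is the main tool for passing from local germs to statements about complete rulings.

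Third, for each non-ruled germ in the Dajczer--Vlachos list I would examine the moduli that distinguish it from the ruled germ, namely the splitting tensor of $\Delta^\perp$ and the analytic functions on the two-dimensional quotient of $U$ by $\Delta$ that appear in the normal form. The plan is to show, via Codazzi-type equations along a complete leaf, that these moduli cannot be maintained without forcing a de Rham-type splitting; concretely, a complete leaf of $\Delta$ contained in $U$ and belonging to a non-ruled germ forces $f$, in an open neighborhood of the leaf, to be a cylinder over a rank-two hypersurface of $\R^4$ whose one-dimensional relative nullity leaves are complete. This configuration is precisely what the hypothesis of the theorem excludes. Hence every complete leaf of $\Delta$ in $U$ must belong to the ruled germ, and the family of such leaves assembles into the ruled strips claimed in the statement.

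The main obstacle I anticipate is the treatment of the purely infinitesimal family from \cite{DaVl}, which has no counterpart in the isometric setting of \cite{DaGr}; ruling out its persistence along a complete leaf of $\Delta$ requires exploiting the specific algebraic structure of the infinitesimal normal form rather than relying on an already available isometric deformation, and it is here that one must produce a holonomy-type identity whose only globally consistent solutions correspond to the forbidden cylinder case. Once this case is disposed of, combining the analysis on $U$ with the rank-one and rank-drop portions yields the decomposition of the bending region into ruled strips, and the simply connected statement then follows by integrating the ruled infinitesimal bending to an isometric bending along each strip, using the correspondence provided in \cite{DaGr}.
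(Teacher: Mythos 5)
Your overall strategy (reduce to the rank-two set, use completeness along the relative nullity leaves, and invoke the hypothesis excluding cylinders) points in the right direction, but there are two genuine gaps, one of which is an outright error.

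First, your treatment of the rank-one region is wrong. You claim that where $\rank A=1$ the hypersurface ``already exhibits itself as ruled, with complete rulings because $M^n$ is complete.'' Completeness of $M^n$ does not give completeness of the relative nullity leaves of an open subset: a leaf may terminate at a point where the rank changes. In fact, under the hypotheses of the theorem the opposite holds --- on the open set $W_1$ where $\rank A=1$ the relative nullity leaves \emph{cannot} be complete on any open subset (if they were, Lemma~\ref{lema autovalor} would force the splitting tensor to be nilpotent and $f$ would contain one of the excluded cylinders/totally geodesic pieces). The actual content of the theorem on $W_1$ is that the tensor $B$ associated to the bending must \emph{vanish} there, i.e.\ the bending is trivial outside the ruled strips; this requires a separate argument (ruling out $\rank B=2$ on $W_1$ via the Codazzi equations \eqref{B2} for $B$ and transversality at rank-drop points, then propagating $B=0$ along the incomplete nullity geodesics). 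You cannot simply declare this region to be part of the conclusion.

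Second, in the rank-two region the crucial step is to prove that the relative nullity leaves are complete \emph{on the set where $B\neq0$}, and this does not follow from ``standard completeness arguments (following Ferus).'' One needs the specific linear ODE $\nabla_{\gamma'(s)}B=\bar C_{\gamma'(s)}'B$ satisfied by $B$ along a nullity geodesic reaching the boundary of that set (Lemma~\ref{equation for B}), together with a determinant/column argument, to conclude that $B$ cannot vanish at the endpoint --- whence the leaf cannot leave the set. Without this, your subsequent use of the splitting tensor and of Lemma~\ref{cokerC} (nilpotency, $\operatorname{codim}C_0\le1$, exclusion of the cylinder case) has no foothold. Relatedly, you never address completeness of the \emph{rulings} themselves (as opposed to the nullity leaves), which in the paper requires the explicit form \eqref{forma de B reglada} of $B$ on a ruled hypersurface and the propagation equation \eqref{ecu reglada} along a ruling. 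Your appeal to the Dajczer--Vlachos germ classification is not how the argument is run: the proof works directly with $B$ and the splitting tensor rather than with the parametric normal forms, and the ``purely infinitesimal'' family is handled automatically by these tensorial identities rather than by a separate holonomy argument.
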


The precise definition of the relative nullity foliation in the above statement is given in the next section.

Notice that any vector field normal to a hypersurface along a totally geodesic subset is trivially an infinitesimal bending. In particular, a compact Euclidean hypersurface is infinitesimally rigid provided that it has no open totally geodesic subsets \cite{DaRo}.
Observe also that the hypothesis of the theorem also rules out open subsets that are cylinders over curves in $\R^2$ as well as cylinders over surfaces in $\R^3$. 

We point out that the existence of complete nonruled isometrically bendable hypersurfaces of constant rank two in $\R^4$, that are not surface-like, is an open problem \cite{DaGr}. By surface-like we mean a hypersurface that is a cylinder over either a surface in $\R^3$ or a cone over a surface in an umbilical submanifold of $\R^4$.

It was shown in \cite{DaVl} that any infinitesimal bending of a simply connected (non-flat) ruled hypersurface, that is not surface-like on any open subset, is the variational vector field of an isometric bending. Thus, any infinitesimal bending of a simply connected complete hypersurface in $\R^{n+1}$ satisfying the assumptions of Theorem~\ref{main}, is the variational vector field of an isometric bending, in contrast with what happens in the local case; see Corollary~\ref{corollary}.

The author would like to thank M. Dajczer for his valuable comments on preliminary versions of this work.
\section{Preliminaries}

In this section, we present several results to be used for the proof 
of Theorem~\ref{main} given in the following section.

\vspace{1ex}

Given an isometric immersion $f\colon M^n\to \R^{n+1}$ of a Riemannian manifold $M^n$, $n\geq 4$, let $\tau\in\Gamma(f^{*}T\R^{n+1})$ be an infinitesimal bending of $f$ and let $f_t$, $t\in\R$,  be the immersions given by \eqref{variation}.

\begin{lemma}\po\label{rango3} The immersions $f_t$ and $f_{-t}$ induce the same metric on $M^n$ for any $t\in\R$. Moreover, if $f$ is not totally geodesic and there is $0\neq t_0\in\R$ such that $f_{t_0}$ and $f_{-t_0}$ are congruent then $\tau$ is trivial.
\end{lemma}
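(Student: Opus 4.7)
The first claim is immediate from \eqref{metrica t}, whose right-hand side depends on $t$ only through $t^2$; hence $f_t$ and $f_{-t}$ induce the same first fundamental form on $M^n$.

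For the second claim, my plan is to unpack the congruence hypothesis as the existence of $C\in O(n+1)$ and $v\in\R^{n+1}$ with
$$f(x)+t_0\tau(x) = C\bigl(f(x)-t_0\tau(x)\bigr)+v\quad\text{for all }x\in M^n,$$
and to differentiate along an arbitrary $X\in\mathfrak{X}(M)$ to obtain the key identity
$$(C-I)f_*X = t_0(I+C)\tilde{\nabla}_X\tau. \qquad (\star)$$

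From here I would split into two cases according to whether $I+C$ is invertible. In the generic case it is, and then $(\star)$ yields $\tilde{\nabla}_X\tau = \mathcal{D}f_*X$ for
$$\mathcal{D} = \tfrac{1}{t_0}(I+C)^{-1}(C-I),$$
the (rescaled) Cayley transform of $C$. A short computation using $C^{T}C=I$ (together with the fact that $C-I$ and $(C+I)^{-1}$ commute, as both are polynomials in $C$) shows $\mathcal{D}^{T}=-\mathcal{D}$. Since $\tilde{\nabla}_X(\tau-\mathcal{D}f)=0$ for every tangent $X$ and $M^n$ is connected, $\tau-\mathcal{D}f$ must be a constant vector $w\in\R^{n+1}$, and $\tau=\mathcal{D}f+w$ is trivial by definition.

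The main obstacle is ruling out the degenerate case $E_{-1}:=\ker(I+C)\neq\{0\}$. My approach is to observe that both $E_{-1}$ and $E_{-1}^\perp$ are $C$-invariant and that $(I+C)$ maps into $E_{-1}^\perp$; so $(\star)$ forces $(C-I)f_*X\in E_{-1}^\perp$ for every $X$. Since $C-I$ restricts to $-2\,\id$ on $E_{-1}$, projecting $(\star)$ onto $E_{-1}$ shows that the $E_{-1}$-component of $f_*X$ must vanish for every $X\in\mathfrak{X}(M)$. Thus $f_*T_xM^n\subset E_{-1}^\perp$ at every point, and since $E_{-1}^\perp$ is a fixed proper subspace of $\R^{n+1}$, this forces $f(M^n)$ into an affine hyperplane, making $f$ totally geodesic and contradicting the hypothesis. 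This dichotomy is the only subtle ingredient; everything else is a direct calculation.
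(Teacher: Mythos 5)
Your proof is correct and complete. Note that the paper does not actually prove this lemma: it simply cites Theorem~1 of \cite{DaRo}, and your argument is essentially a self-contained reconstruction of that classical proof. The first claim is indeed immediate from \eqref{metrica t}. For the second, your identity $(\star)$, the skew-symmetry of the Cayley transform $\mathcal{D}=\tfrac{1}{t_0}(I+C)^{-1}(C-I)$ (the commutation of $C-I$ with $(C+I)^{-1}$ follows, e.g., from $(C-I)(C+I)=(C+I)(C-I)$), and the conclusion that $\tau-\mathcal{D}f$ is parallel, hence constant on the connected manifold $M^n$, are all in order. Your treatment of the degenerate case $\ker(I+C)\neq\{0\}$ is the one genuinely delicate point and you handle it correctly: since $C$ is orthogonal, both $E_{-1}$ and $E_{-1}^\perp$ are $C$-invariant, so $C-I$ preserves the splitting and acts as $-2\,\id$ on $E_{-1}$, forcing $f_*T_xM\subset E_{-1}^\perp$ at every point; as $\dim f_*T_xM=n$ this pins $f(M^n)$ inside a fixed affine hyperplane, whence $f$ is totally geodesic, contradicting the hypothesis. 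The only value added by writing it out, relative to the paper's citation, is that the reader sees exactly where the non--totally-geodesic hypothesis enters, namely in excluding $-1$ from the spectrum of $C$ (or rather, in excluding a nonzero component of $f_*$ along the $(-1)$-eigenspace).
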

\begin{proof}
See Theorem 1 in \cite{DaRo}. \vspace{2ex}\qed
\end{proof}

If $g_t$ denotes the metric on $M^n$ induced by $f_t$, we have from \eqref{metrica t} that 
$$
\partial/\partial t|_{t=0} g_t(X,Y)=0
$$ 
for all $X,Y\in \mathfrak{X}(M)$. Thus, the corresponding Levi-Civita connections 
satisfy
\begin{equation} \label{derivada conex}
\partial/\partial t|_{t=0} \nabla_{X}^tY=0 
\end{equation} 
for all $X,Y\in \mathfrak{X}(M)$.

Let $N(t)$ be a smooth one-parameter family of unitary vector fields, such that $N(t)$ is normal to the hypersurface $f_t$. Then let $A(t)$ stand for the corresponding second fundamental forms. In the sequel, we write $N=N(0)$ and $A=A(0)$. Let $\xi\in\Gamma(f^{*}T\R^{n+1})$ and $B\in\Gamma(\End(TM))$ be given by  
\begin{equation}\label{def B}
\xi=\partial/\partial t|_{t=0}N(t)\;\;\mbox{and}\;\; B=\partial/\partial t|_{t=0}A(t)
\end{equation}
respectively. 
In addition, let $L\in \Gamma(\End(TM,f^{*}T\R^{n+1}))$ be defined by 
$$
LX=\tilde{\nabla}_{X}\tau
$$ 
where $\tilde{\nabla}$ is the connection in $\R^{n+1}$.
\begin{lemma}\po
The tensor $L$ and the vector field $\xi$ satisfy
\begin{equation}\label{derivada del normal}
\<\xi,N\>=0
\end{equation}
and 
\begin{equation}\label{relacion l con el normal}
\<\xi, f_{*}X\>+\< N, LX\>=0
\end{equation}
for all $X\in \mathfrak{X}(M)$.
\end{lemma}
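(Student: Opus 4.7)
The plan is to derive both identities by differentiating at $t=0$ two defining properties of the family $N(t)$: the unit length condition and the normality to $f_t$.

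First I would establish \eqref{derivada del normal}. Since $N(t)$ is a smooth one-parameter family of unit normal vector fields, we have $\<N(t),N(t)\>=1$ for all $t$. Differentiating this identity with respect to $t$ at $t=0$ and using the definition $\xi=\partial/\partial t|_{t=0}N(t)$ yields $2\<\xi,N\>=0$, which is precisely \eqref{derivada del normal}.

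For \eqref{relacion l con el normal}, the starting point is that $N(t)$ is normal to $f_t$, that is, $\<N(t),f_{t*}X\>=0$ for every $X\in\mathfrak{X}(M)$. Differentiating this identity at $t=0$ gives
\begin{equation*}
\<\xi,f_{*}X\>+\Big\<N,\frac{\partial}{\partial t}\Big|_{t=0}f_{t*}X\Big\>=0.
\end{equation*}
From the explicit form of the variation \eqref{variation}, namely $f_t=f+t\tau$, we obtain that $f_{t*}X=f_{*}X+t\,\tilde{\nabla}_{X}\tau=f_{*}X+tLX$, so that $\partial/\partial t|_{t=0}f_{t*}X=LX$. Substituting this into the previous equation gives exactly \eqref{relacion l con el normal}.

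I do not expect any serious obstacle here; both statements follow immediately from differentiating the normalization conditions of $N(t)$, together with the explicit expression for $f_t$ in \eqref{variation} and the definition of $L$. The only point worth being slightly careful about is interpreting $f_{t*}X$ as a vector in $\R^{n+1}$ (via $\R^{n+1}$-parallel transport), so that the $t$-derivative $\partial/\partial t|_{t=0}f_{t*}X$ may be computed in the ambient Euclidean space and coincides with $\tilde{\nabla}_{X}\tau=LX$.
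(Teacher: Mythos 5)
Your proof is correct and follows exactly the paper's argument: differentiate $\<N(t),N(t)\>=1$ at $t=0$ for \eqref{derivada del normal}, and differentiate $\<N(t),f_{t*}X\>=0$ at $t=0$ using $f_{t*}X=f_*X+tLX$ for \eqref{relacion l con el normal}. No issues.
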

\begin{proof}
The derivative of $\< N(t), N(t)\>=1$ with respect to $t$  at $t=0$ yields \eqref{derivada del normal}. In a similar way $\< N(t),f_{t*}X\>=0$ provides \eqref{relacion l con el normal}. \qed
\end{proof}

\begin{lemma}\po
The covariant derivatives of $L$ and $\xi$ are given by
\begin{equation} \label{derivada de L}
(\tilde\nabla_{X}L)Y=\< BX,Y\> N+\< AX,Y\> \xi
\end{equation}
and \begin{equation}\label{derivada de xi}
\tilde{\nabla}_{X}\xi=-f_{*}BX-LAX
\end{equation}
for all $X,Y\in \mathfrak{X}(M)$.
\end{lemma}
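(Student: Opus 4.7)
The plan is to differentiate the Gauss and Weingarten formulas of the family $f_t=f+t\tau$ at $t=0$ and match up the terms using the definitions of $L$, $B$, and $\xi$.

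For \eqref{derivada de L}, I would start from the Gauss formula for $f_t$:
\begin{equation*}
\tilde{\nabla}_X f_{t*}Y = f_{t*}\nabla^t_X Y + \< A(t)X,Y\>_{g_t}\, N(t),
\end{equation*}
where $\nabla^t$ is the Levi-Civita connection of $g_t$. Since $f_{t*}Z=f_*Z+tLZ$ for any (possibly $t$-dependent) tangent field $Z$, this reads
\begin{equation*}
\tilde{\nabla}_X(f_*Y+tLY) = f_*\nabla^t_X Y + tL(\nabla^t_X Y) + \< A(t)X,Y\>_{g_t}\, N(t).
\end{equation*}
Differentiating both sides in $t$ at $t=0$: the LHS gives $\tilde{\nabla}_X(LY)$; on the RHS, $\partial/\partial t|_{t=0} f_*\nabla^t_X Y$ vanishes by \eqref{derivada conex}, the product $\partial/\partial t|_{t=0}(tL\nabla^t_X Y)$ contributes $L\nabla_X Y$, and the third term produces $\partial/\partial t|_{t=0}\<A(t)X,Y\>_{g_t}\,N+\<AX,Y\>\xi$. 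Because $\partial/\partial t|_{t=0}g_t=0$ (seen from \eqref{metrica t}), the $t$-derivative of $\<A(t)X,Y\>_{g_t}$ reduces to $\<BX,Y\>$. Rearranging yields $\tilde{\nabla}_X(LY)-L\nabla_X Y=\<BX,Y\>N+\<AX,Y\>\xi$, which is precisely $(\tilde{\nabla}_X L)Y=\<BX,Y\>N+\<AX,Y\>\xi$.

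For \eqref{derivada de xi}, I would differentiate the Weingarten formula $\tilde{\nabla}_X N(t)=-f_{t*}A(t)X$. Expanding via $f_{t*}=f_*+tL$ this becomes $\tilde{\nabla}_X N(t)=-f_*A(t)X-tLA(t)X$. Taking $\partial/\partial t|_{t=0}$: the LHS becomes $\tilde{\nabla}_X\xi$, and on the RHS the first term contributes $-f_*BX$ (by definition of $B$), while the product rule applied to the second term yields $-LA(0)X-0\cdot\partial/\partial t|_{t=0}[LA(t)X]=-LAX$. This gives the required identity.

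The computations are essentially forced; the only real care needed is bookkeeping the product rule when several quantities ($A(t)$, $N(t)$, $\nabla^t$, $g_t$, and the argument $\nabla^t_X Y$) vary with $t$. The key simplification throughout is that the first-order variation of the metric and of the Levi-Civita connection vanish at $t=0$ by \eqref{metrica t} and \eqref{derivada conex}, so every term of that type drops out, leaving only the contributions that assemble into $B$, $\xi$, and $L$.
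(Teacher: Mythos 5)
Your derivation is correct: differentiating the Gauss and Weingarten formulas of the family $f_t=f+t\tau$ at $t=0$, and using that $\partial/\partial t|_{t=0}g_t=0$ and \eqref{derivada conex} kill the metric and connection variation terms, gives exactly \eqref{derivada de L} and \eqref{derivada de xi}. The paper itself offers no proof here — it simply cites Lemma 9 of \cite{DaVl} — and your computation is the standard argument behind that reference, so there is nothing substantive to compare.
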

\begin{proof}
See Lemma 9 in \cite{DaVl}. \vspace{2ex}\qed
\end{proof}

In addition, we have the following result.   
\begin{lemma}\po\label{B es Codazzi}
The tensor $B$ is symmetric and satisfies
\begin{equation}\label{B1}
BX\wedge AY-BY\wedge AX=0
\end{equation}
and
\begin{equation}\label{B2}
(\nabla_{X}B)Y=(\nabla_{Y}B)X
\end{equation} 
for all $X,Y\in \mathfrak{X}(M)$. 
\end{lemma}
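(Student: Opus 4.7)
The plan is to derive all three claims by differentiating at $t=0$ the intrinsic identities satisfied by the one-parameter family of immersions $f_t$ from \eqref{variation}. The key input is the first-order isometry $\partial_t g_t|_{t=0}=0$, immediate from \eqref{metrica t}, together with its corollary \eqref{derivada conex}.

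For the symmetry of $B$: each shape operator $A(t)$ is $g_t$-self-adjoint, $g_t(A(t)X,Y)=g_t(X,A(t)Y)$, and differentiating at $t=0$ (the $\partial_t g_t|_{t=0}$ contributions drop out) gives $\<BX,Y\>=\<X,BY\>$.

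For \eqref{B1}, start from the Gauss equation for $f_t$,
\[
R^t(X,Y)Z=g_t(A(t)Y,Z)\,A(t)X-g_t(A(t)X,Z)\,A(t)Y.
\]
The identity \eqref{derivada conex} implies $\partial_t R^t|_{t=0}=0$. Differentiating the Gauss equation at $t=0$ and using $\partial_t g_t|_{t=0}=0$ to kill the metric derivatives, one obtains
\[
0=\<AY,Z\>BX+\<BY,Z\>AX-\<AX,Z\>BY-\<BX,Z\>AY,
\]
which is precisely $(BX\wedge AY-BY\wedge AX)Z=0$ under the convention $(V\wedge W)Z=\<W,Z\>V-\<V,Z\>W$. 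For \eqref{B2}, differentiate the Codazzi equation $(\nabla^t_X A(t))Y=(\nabla^t_Y A(t))X$ at $t=0$; by \eqref{derivada conex} the $\partial_t\nabla^t|_{t=0}$ terms disappear, so both sides collapse to $(\nabla_X B)Y$ and $(\nabla_Y B)X$ respectively.

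The only non-routine point is $\partial_t R^t|_{t=0}=0$, which follows because $\nabla^t-\nabla$ is a tensor vanishing to first order at $t=0$ by \eqref{derivada conex}, and $R^t$ is polynomial in that tensor and its first derivatives. A self-contained alternative, staying entirely within the framework just introduced, is to invoke the flatness of $\tilde\nabla$ in the form $\tilde\nabla_X\tilde\nabla_Y\xi-\tilde\nabla_Y\tilde\nabla_X\xi=\tilde\nabla_{[X,Y]}\xi$ and expand using \eqref{derivada de L} and \eqref{derivada de xi}: comparing tangential components yields \eqref{B2}, while the analogous identity for $L$, combined with the Gauss equation for $A$, yields \eqref{B1}.
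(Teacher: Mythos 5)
Your proof is correct. Note that the paper itself gives no argument here: it simply cites Proposition 10 of \cite{DaVl}, so you are supplying a proof where the author defers to the literature. Your main route --- differentiating at $t=0$ the $g_t$-self-adjointness of $A(t)$ and the Gauss and Codazzi equations of the family $f_t$, with all extra terms killed by $\partial_t g_t|_{t=0}=0$ and \eqref{derivada conex} --- is clean, and the one genuinely non-routine point, $\partial_t R^t|_{t=0}=0$, is correctly justified: writing $R^t$ in terms of the difference tensor $\Gamma^t=\nabla^t-\nabla$ and its covariant derivatives, the $t$-derivative at $0$ involves only $\dot\Gamma$ and products with $\Gamma^0=0$, and $\dot\Gamma=0$ is exactly \eqref{derivada conex}. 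The alternative you sketch at the end --- expanding the flatness identity $\tilde\nabla_X\tilde\nabla_Y - \tilde\nabla_Y\tilde\nabla_X - \tilde\nabla_{[X,Y]}=0$ applied to \eqref{derivada de L} and \eqref{derivada de xi} and separating normal and tangential components --- is essentially the computation behind the cited Proposition 10; it has the advantage of working directly with the data $(L,\xi,B)$ attached to $\tau$ without invoking the family $f_t$ and its induced geometric structures, while your primary argument buys conceptual transparency: \eqref{B1} and \eqref{B2} are literally the linearizations of the Gauss and Codazzi equations along the variation. Either way the statement is established.
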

\proof  See Proposition $10$ in \cite{DaVl}.\vspace{2ex}\qed

If $\tau$ is a trivial infinitesimal bending, it follows from \eqref{derivada de L} that 
$$
\<BX,Y\>=\<(\tilde{\nabla}_X L)Y,N\>=\<{\cal D}(\tilde{\nabla}_X f_*)Y,N\>
=\<AX,Y\>\<{\cal D}N,N\>=0
$$
for all $X,Y\in\mathfrak{X}(M)$. Hence, the symmetric tensor  associated to a trivial infinitesimal bending is $B=0$. 
Moreover,  we have from \cite{DaVl} that if the symmetric tensor $B$ associated to an infinitesimal bending $\tau$ is identically zero then $\tau$ is trivial.
\vspace{2ex}

The \emph{relative nullity subspace} $\Delta(x)\subset T_{x}M$ of $f$ at $x\in M^n$ is 
$\Delta(x)=\ker A(x)$. The dimension $\nu(x)$ of $\Delta(x)$ is called the \emph{index of relative nullity}.
\vspace{1ex}

It is a standard fact that on an open subset $U\subset M^n$ where $\nu(x)$ is constant, the \emph{relative nullity distribution} 
$x\in U\mapsto\Delta(x)$ is totally geodesic and the leaves of the  \emph{relative nullity foliation} are totally geodesic submanifolds of $\R^{n+1}$.\vspace{1ex}

We recall that the \emph{splitting tensor} $C\colon\Gamma(\Delta)\to\Gamma(\End( \Delta^{\perp}))$ is defined by
$$
C_{T}X=-(\nabla_{X}T)_{\Delta^{\perp}},
$$
where $T\in \Gamma(\Delta)$ and $X\in \Gamma(\Delta^{\perp})$; see \cite{DaGr}. It follows from the Codazzi equation that the splitting tensor satisfies 
\begin{equation}\label{A1}
\nabla_{T}A=AC_{T}=C_{T}'A
\end{equation}
where $C'_{T}$ denotes the transpose of $C_{T}$. 
\vspace{2ex}

The following is a fundamental result in the theory of isometric immersions.
\begin{lemma}\po\label{lema eq}Let $f\colon M^n\to \R^{n+1}$ be an isometric immersion and let $U\subset M^n$ be an open subset where  $\nu(x)=\nu_0$ is constant. 
Let $\gamma\colon\left[0,a\right]\to M^n$ be a geodesic such that $\gamma(\left[0,a\right))\subset U$ is contained in a leaf of the relative nullity foliation. Then $\nu(\gamma (a))=\nu_0$ and the splitting tensor $C_{\gamma'}$ has a smooth extension $\bar{C}_{\gamma'}$ to $\left[0,a\right]$ that satisfies the differential equation 
\begin{equation*}
\nabla_{\gamma'(s)}A=A\bar{C}_{\gamma'(s)}.
\end{equation*}
\end{lemma}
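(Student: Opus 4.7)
The plan is to work in a parallel frame along $\gamma$ and reduce the assertion to an ODE analysis for the matrix representations of $A$ and $C_{\gamma'}$. Identify $T_{\gamma(s)}M$ with $V:=T_{\gamma(0)}M$ by parallel transport, and denote the resulting smooth family of symmetric endomorphisms by $\mathcal{A}(s)\in\End(V)$, defined on all of $[0,a]$. Since the leaves of $\Delta$ are totally geodesic in $M^n$ and $\gamma([0,a))$ lies in one such leaf, parallel transport along $\gamma$ preserves $\Delta(\gamma(0))$. Writing $\Delta_0:=\Delta(\gamma(0))\subset V$ and decomposing $V=\Delta_0^{\perp}\oplus\Delta_0$, this yields
\begin{equation*}
\mathcal{A}(s)=\begin{pmatrix}P(s)&0\\0&0\end{pmatrix},\qquad s\in[0,a],
\end{equation*}
with $P(s)\in\End(\Delta_0^{\perp})$ smooth on $[0,a]$ and invertible on $[0,a)$. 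In particular, the claim $\nu(\gamma(a))=\nu_0$ is equivalent to the invertibility of $P(a)$.

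In the same trivialization, extend $C_{\gamma'(s)}$ by zero on $\Delta(\gamma(s))$, so that its block representation is $\mathcal{C}(s)=\begin{pmatrix}D(s)&0\\0&0\end{pmatrix}$ on $[0,a)$. Then \eqref{A1} becomes the matrix ODE $P'(s)=P(s)D(s)$ on $[0,a)$. The crucial additional input is the Riccati equation $D'(s)=D(s)^2$. I would derive it by extending $T=\gamma'$ smoothly so that $\nabla_T T=0$ and applying the Ricci identity to $C_T Y=-\nabla_Y T$, obtaining $(\nabla_T C_T)Y=C_T^{\,2}Y-R(T,Y)T$; the Gauss equation for a hypersurface in $\R^{n+1}$ together with $AT=0$ then forces $R(T,Y)T=0$. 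This is the geometric heart of the argument and the only step where the flatness of $\R^{n+1}$ enters essentially.

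With $P'=PD$ and $D'=D^2$ in hand, set $Q:=P^{-1}$ on $[0,a)$. Differentiating $PQ=I$ yields $Q'=-DQ$, so $(DQ)'=D'Q+DQ'=D^2Q-D^2Q=0$ and hence $DQ\equiv K:=D(0)P(0)^{-1}$. Integrating $Q'=-K$ gives $Q(s)=(I-sD(0))P(0)^{-1}$, whence
\begin{equation*}
P(s)=P(0)\bigl(I-sD(0)\bigr)^{-1}\quad\text{on }[0,a).
\end{equation*}
Since $P$ is smooth (and therefore bounded) on $[0,a]$ and $P(0)$ is invertible, this formula forces $I-sD(0)$ to remain invertible throughout $[0,a]$, as otherwise $(I-sD(0))^{-1}$ would blow up as $s\to a^-$. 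Therefore $P(a)=P(0)(I-aD(0))^{-1}$ is invertible, proving $\nu(\gamma(a))=\nu_0$. Moreover $D(s)=D(0)(I-sD(0))^{-1}$ extends smoothly to $[0,a]$, providing the sought extension $\bar{C}_{\gamma'}$, and the identity $\nabla_{\gamma'(s)}A=A\bar{C}_{\gamma'(s)}$ on all of $[0,a]$ follows by continuity. The main obstacle is precisely the derivation of the Riccati equation, since it is not stated as a separate ingredient in the preliminaries; the rest is linear algebra and an explicit integration of a linear ODE.
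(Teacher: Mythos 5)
Your proof is correct: the paper itself offers no argument for this lemma (it only cites Theorem 5.3 and Lemma 6.15 of \cite{Da}), and your route --- parallel frame along $\gamma$, the block form of $\mathcal{A}$ from the parallelism of $\Delta$ along its leaves, the Riccati equation $\nabla_{\gamma'}C_{\gamma'}=C_{\gamma'}^2$ coming from $R(\gamma',\cdot)\gamma'=0$ via the Gauss equation, and the explicit integration $P(s)=P(0)(I-sD(0))^{-1}$ whose boundedness on $[0,a]$ forces $I-aD(0)$ to be invertible --- is exactly the standard proof found in that reference. The only step to tighten is the Riccati derivation: $C_TY$ is the $\Delta^{\perp}$-projection of $-\nabla_YT$, and the $C_T^2$ term actually arises from the $\nabla_{[T,Y]}T$ contribution using both the total geodesy of $\Delta$ and the fact that $\Delta^{\perp}$ is parallel along the nullity leaves, but the computation closes exactly as you indicate since the curvature term vanishes.
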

\begin{proof}
See Theorem 5.3 and Lemma 6.15 in \cite{Da}.\vspace{2ex}\qed
\end{proof}

In the following result $\nabla^h$ denotes the connection induced on $\Delta^\perp$.  
\begin{lemma}\po\label{equation CTXY}
We have that 
\begin{equation*}
(\nabla^h_{X}C_{T})Y-(\nabla^h_{Y}C_{T})X=C_{(\nabla_XT)_{\Delta}}Y-C_{(\nabla_{Y}T)_{\Delta}}X
\end{equation*} 
for any $X,Y\in\Gamma(\Delta^{\perp})$ and $T\in\Gamma(\Delta)$.
\end{lemma}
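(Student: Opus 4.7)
The plan is to derive the identity from the flatness of the ambient space $\R^{n+1}$ combined with the fact that $T\in\ker A$. The Gauss equation reads
\begin{equation*}
R(X,Y)Z=\<AY,Z\>AX-\<AX,Z\>AY,
\end{equation*}
so taking $Z=T\in\Gamma(\Delta)=\Gamma(\ker A)$ gives $R(X,Y)T=0$, that is,
\begin{equation*}
\nabla_X\nabla_YT-\nabla_Y\nabla_XT-\nabla_{[X,Y]}T=0.
\end{equation*}
I would project this identity onto $\Delta^\perp$ for $X,Y\in\Gamma(\Delta^\perp)$ and read off the claimed equality by matching pieces.

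First, using the definition of the splitting tensor, decompose $\nabla_YT=-C_TY+(\nabla_YT)_\Delta$. Applying $\nabla_X$ and projecting onto $\Delta^\perp$ produces two contributions. The term $\nabla_X(-C_TY)$ contributes $-\nabla^h_X(C_TY)$ by the very definition of the induced connection $\nabla^h$. The term $\nabla_X((\nabla_YT)_\Delta)$ contributes $-C_{(\nabla_YT)_\Delta}X$, again directly from the definition of the splitting tensor applied to the section $(\nabla_YT)_\Delta\in\Gamma(\Delta)$. An entirely symmetric computation handles $\nabla_Y\nabla_XT$.

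For the bracket term, I invoke the total geodesy of the relative nullity distribution, which gives $(\nabla_ZT)_{\Delta^\perp}=0$ for any $Z\in\Gamma(\Delta)$. Hence only the $\Delta^\perp$-component of $[X,Y]$ survives the projection, and $[X,Y]_{\Delta^\perp}=(\nabla_XY-\nabla_YX)_{\Delta^\perp}=\nabla^h_XY-\nabla^h_YX$. Collecting the three projected contributions and regrouping with $(\nabla^h_XC_T)Y=\nabla^h_X(C_TY)-C_T(\nabla^h_XY)$ (and similarly with $X$ and $Y$ swapped) produces exactly the required identity, the pure $C_T$-terms coming from the second derivatives cancelling against the bracket contribution.

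The only delicate point is the careful treatment of $[X,Y]$, which in general has a nonzero $\Delta$-component; this is where the total geodesy of $\Delta$ (recalled just before the statement) is crucial to ensure that the unwanted $\Delta$-part does not contribute to the $\Delta^\perp$-projection. Once this observation is in place the remainder is straightforward bookkeeping with the orthogonal decomposition $TM=\Delta\oplus\Delta^\perp$.
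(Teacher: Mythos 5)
Your argument is correct: since $AT=0$, the Gauss equation gives $R(X,Y)T=0$, and projecting the resulting second Bianchi-free curvature identity onto $\Delta^{\perp}$ — using total geodesy of $\Delta$ to discard the $\Delta$-component of $[X,Y]$ and the tensoriality of $C$ in its lower slot to produce the $C_{(\nabla_XT)_{\Delta}}$ terms — yields exactly the stated formula after the $C_T(\nabla^h_XY)-C_T(\nabla^h_YX)$ terms are absorbed into the covariant derivatives of $C_T$. The paper itself only cites Lemma 1.5 of \cite{DaGr} here, and your derivation is precisely the standard proof of that cited result, so nothing is missing.
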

\begin{proof}
See Lemma 1.5 in \cite{DaGr}.\vspace{2ex}\qed
\end{proof} 

When the leaves of the relative nullity are complete we have the following. 
\begin{lemma}\po\label{lema autovalor}Let $f\colon M^n\to \R^{n+1}$ be an isometric immersion.
Assume that $U\subset M^n$ is an open subset where $\nu(x)=\nu_0$ is constant and the relative nullity leaves are complete. Then, for any $x_0\in U$ and $T_0\in\Delta(x_0)$ the only possible real eigenvalue of $C_{T_0}$ is zero. Moreover, if $\gamma(s)$ is a geodesic through $x_0$ tangent to $T_0$ then
\be\label{eq:dif}
C_{\gamma'(s)}=\P_0^s C_{T_0}(Id-sC_{T_0})^{-1}(\P_0^s)^{-1}
\ee
where $\P_0^s$ is the parallel transport along $\gamma$ from $x_0$. In particular, $\ker C_{\gamma'}$ is parallel along the geodesic $\gamma$.
\end{lemma}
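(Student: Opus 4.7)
The plan is to reduce everything to a single Riccati-type ODE satisfied by the splitting tensor along a geodesic tangent to the nullity, solve it explicitly, and then read off the three conclusions from the structure of the solution.

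First I would transfer the splitting tensor back to the fiber over $x_0$ via parallel transport. Set
$$
\tilde C(s)=(\P_0^s)^{-1}\,C_{\gamma'(s)}\,\P_0^s\in\End(\Delta^\perp(x_0)).
$$
The aim is to show that $\tilde C$ satisfies the Riccati equation $\tilde C'(s)=\tilde C(s)^2$ with initial condition $\tilde C(0)=C_{T_0}$. To derive this, I would pick $X\in\Gamma(\Delta^\perp)$ parallel along $\gamma$ in the induced connection $\nabla^h$, compute $\nabla_{\gamma'}(C_{\gamma'}X)$, and swap derivatives using the curvature tensor. Since $\gamma'\in\Delta=\ker A$, the Gauss equation gives $R(X,\gamma')\gamma'=\<A\gamma',\gamma'\>AX-\<AX,\gamma'\>A\gamma'=0$, so the curvature term vanishes. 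The Codazzi-like identity \eqref{A1}, together with $\nabla_{\gamma'}\gamma'=0$ and the definition $C_TX=-(\nabla_XT)_{\Delta^\perp}$, should then rearrange into exactly $\nabla^h_{\gamma'}C_{\gamma'}=C_{\gamma'}^2$, which translates into the ODE for $\tilde C$.

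Once the ODE is established, its unique solution starting at $C_{T_0}$ is $\tilde C(s)=C_{T_0}(\id-sC_{T_0})^{-1}$, defined on the maximal interval on which $\id-sC_{T_0}$ is invertible. Undoing the parallel transport immediately yields the formula~\eqref{eq:dif}. For the eigenvalue statement, suppose $C_{T_0}$ had a nonzero real eigenvalue $\lambda$. Then $\id-sC_{T_0}$ becomes singular at $s_0=1/\lambda$, so $\tilde C(s)$, and hence $C_{\gamma'(s)}$, blows up as $s\to s_0$. But completeness of the leaves extends $\gamma$ past $s_0$, and by Lemma~\ref{lema eq} the splitting tensor $C_{\gamma'}$ admits a smooth extension to $s_0$, a contradiction. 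So only the zero eigenvalue is possible.

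Finally, for the assertion that $\ker C_{\gamma'}$ is parallel along $\gamma$: if $v\in\ker C_{T_0}$, then $(\id-sC_{T_0})v=v$, so $(\id-sC_{T_0})^{-1}v=v$, and thus $\tilde C(s)v=C_{T_0}v=0$. This means $\P_0^sv\in\ker C_{\gamma'(s)}$, i.e.\ parallel transport carries $\ker C_{T_0}$ into $\ker C_{\gamma'(s)}$. Dimension is constant (by the explicit formula for $\tilde C(s)$, whose kernel equals that of $C_{T_0}$ whenever defined), so the inclusion is an equality and the kernel is parallel. The main obstacle I anticipate is the clean derivation of the Riccati equation $\nabla^h_{\gamma'}C_{\gamma'}=C_{\gamma'}^2$: the manipulation involves keeping careful track of $\Delta$ and $\Delta^\perp$ components of $\nabla_XT$ and invoking Lemma~\ref{equation CTXY} (or a direct Codazzi computation) at just the right moment; everything else is then a linear-algebra exercise in solving a matrix ODE.
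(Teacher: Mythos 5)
Your proposal is correct and is essentially the argument of Dajczer--Gromoll (Lemma 1.8), to which the paper simply refers: derive the Riccati equation $\nabla^h_{\gamma'}C_{\gamma'}=C_{\gamma'}^2$ from the vanishing of $R(X,\gamma')\gamma'$, solve it explicitly, and use completeness of the leaves to rule out poles and hence nonzero real eigenvalues. The one small inaccuracy is that neither \eqref{A1} nor Lemma~\ref{equation CTXY} is the tool needed for the Riccati step --- what is used is that $\Delta$ lies in the nullity of the curvature tensor and is totally geodesic --- but since you correctly identify the key vanishing $R(X,\gamma')\gamma'=0$, the derivation goes through as sketched.
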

\begin{proof}
See Lemma 1.8 in \cite{DaGr}.\vspace{2ex}\qed
\end{proof}

An isometric immersion $G\colon M^{n-k}\times \R^{k}\to \R^{m}$ of the Riemannian product $M^{n-k}\times \R^{k}$ is called a \emph{cylinder} over the isometric immersion $g\colon M^{n-k}\to\R^{m-k}$, if it factors as $$G=g\times Id\colon M^{n-k}\times \R^{k}\to \R^{m-k}\times\R^{k}$$ where $Id\colon\R^{k}\to\R^{k}$ is the identity.\vspace{2ex}

Notice that, on open sets where $\nu (x)$ is constant, the distribution $\Delta^{\perp}$ is totally geodesic if and only if $C$ is identically zero. In such case we have that the manifold is locally a Riemannian product and the immersion is locally a piece of a cylinder.

The following result is useful when characterizing hypersurfaces of constant rank $2$ whose relative nullity leaves are complete.  

\begin{lemma}\po\label{cokerC}
Let $f\colon M^n\to \R^{n+1}$ be an isometric immersion. If $U\subset M^n$ is an open subset where $f$ has constant rank $2$ and the leaves of the relative nullity are complete, then the codimension of 
$$
C_0(x)=\{T\in\Delta(x) : x\in U \;\;\mbox{and}\;\;C_{T}=0\}
$$
is at most one. Moreover, if $\dim C_0^{\perp}=1$ and $C_{T}$ is invertible
for $T\in\Gamma(C_0^{\perp})$, then $f|_U$ is a cylinder over a hypersurface $g\colon L^{3}\to \R^{4}$ that carries a one-dimensional relative nullity distribution with complete leaves. 
\end{lemma}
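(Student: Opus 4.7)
Choose an orthonormal eigenbasis of the invertible symmetric operator $A|_{\Delta^\perp}$ with eigenvalues $\lambda_1, \lambda_2 \ne 0$. The relation $AC_T = C_T'A$ from \eqref{A1} confines $C_T$ to a three-parameter family of matrices in $\End(\Delta^\perp)$, and Lemma~\ref{lema autovalor} requires every such $C_T$ to have only zero as a possible real eigenvalue, i.e.\ $C_T$ is either nilpotent or has a complex-conjugate non-real pair of eigenvalues. A direct computation shows that when $\lambda_1\lambda_2 > 0$ only $C_T = 0$ is admissible, so $C_0(x) = \Delta(x)$; when $\lambda_1\lambda_2 < 0$, the admissible $C_T$'s form a closed double wedge in $\R^3$ whose maximal linear subspaces have dimension one. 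In either case the image of the linear map $T \mapsto C_T$ has dimension at most one, giving $\dim C_0(x)^\perp \le 1$.

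\textbf{Part 2 (cylinder conclusion).} Assume now $\dim C_0^\perp = 1$, choose a smooth unit section $T_0 \in \Gamma(C_0^\perp \cap \Delta)$, and assume $C_{T_0}$ invertible. The strategy is to prove that $C_0$ is an autoparallel $(n-3)$-dimensional distribution on $U$. Granted this, De Rham yields a local orthogonal splitting $U \cong L^3 \times N^{n-3}$ with $C_0$ tangent to the $N$-factor; since $C_0 \subset \Delta$ and the relative nullity leaves are complete, each leaf of $N$ is a complete flat affine submanifold of $\R^{n+1}$, hence $N \cong \R^{n-3}$. The identity $\tilde\nabla_X T = \nabla_X T \in C_0$ for $T \in C_0 \subset \Delta$ then shows that the direction of $C_0$ in $\R^{n+1}$ is constant along $M$, producing the cylinder factorization $f|_U = g \times \mathrm{Id}$ with $g\colon L^3 \to \R^4$. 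The one-dimensional relative nullity of $g$ corresponds to $C_0^\perp \cap \Delta$, inherits an invertible splitting tensor from $C_{T_0}$, and has complete leaves coming from projections of the complete $\Delta$-leaves of $f$.

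\textbf{Proving parallelism, and the main obstacle.} For $T' \in \Gamma(C_0)$ and $X \in \Gamma(\Delta^\perp)$, apply Lemma~\ref{equation CTXY} with $T = T'$: the left-hand side vanishes (since $C_{T'} \equiv 0$), and decomposing $(\nabla_X T')_\Delta = a_X T_0 + S_X$ with $S_X \in C_0$ reduces the right-hand side to $a_X C_{T_0} Y = a_Y C_{T_0} X$ for $X, Y \in \Delta^\perp$. The invertibility of $C_{T_0}$ on the two-dimensional $\Delta^\perp$ forces $a_X = 0$ for all $X$; together with $(\nabla_X T')_{\Delta^\perp} = -C_{T'}X = 0$, this yields $\nabla_X T' \in C_0$ for $X \in \Delta^\perp$. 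For $X \in \Delta$, total geodesicity of $\Delta$ gives $\nabla_X T' \in \Delta$, so the remaining task is to show the $T_0$-component vanishes, equivalently that $\nabla_X T_0 = 0$ for all $X \in \Delta$. This is the main obstacle, since Lemma~\ref{equation CTXY} only controls $\Delta^\perp$-directions. I would attack it by combining the evolution formula \eqref{eq:dif} applied along geodesics in both $T_0$- and $C_0$-directions (which tracks the transport of the splitting tensor and, via the invertibility of $C_{T_0}$, pins down the transport of $T_0$ itself) with the Codazzi identity \eqref{A1} (which for $T' \in C_0$ gives $\nabla_{T'} A = 0$, a parallelism of $A$ along $C_0$ that feeds back into the geometric rigidity of the distribution). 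Once $\nabla_X T_0 = 0$ for $X \in \Delta$ is established, $C_0$ is autoparallel, and the cylinder conclusion follows from the setup of Part 2.
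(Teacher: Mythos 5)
The paper does not actually prove this lemma --- it simply cites Lemmas 1.9 and 1.10 of Dajczer--Gromoll \cite{DaGr} --- so your proposal has to stand on its own. Part 1 is essentially sound: the reduction to the $3$-dimensional space $\{C : AC=C'A\}$ and the case split on $\operatorname{sign}(\lambda_1\lambda_2)$ is the right computation, although your description of the admissible set as a ``closed double wedge whose maximal linear subspaces have dimension one'' is not quite right: the closed region $(a-d)^2\le 4(-\lambda_1/\lambda_2)b^2$ is bounded by two planes through the origin, each of which is a $2$-dimensional linear subspace of the closed wedge. What saves the argument is that those boundary planes are not contained in the admissible set (on them $C_T$ has a double real eigenvalue $\tfrac12\operatorname{tr}C_T$, which would have to vanish, leaving only the two nilpotent lines), and any other $2$-plane meets the exterior of the wedge; this needs to be said explicitly. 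Your treatment of the $\Delta^\perp$-directions in the parallelism argument (Lemma~\ref{equation CTXY} with $C_{T'}\equiv 0$, plus invertibility of $C_{T_0}$ on the $2$-dimensional $\Delta^\perp$) is correct.

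The genuine gap is the one you yourself flag: you never prove that $\nabla_T T'\in C_0$ for $T\in\Gamma(\Delta)$, $T'\in\Gamma(C_0)$ (equivalently $\nabla_T T_0=0$), and without it $C_0$ is not known to be parallel, de Rham does not apply, and the cylinder conclusion does not follow. A sketch of how you ``would attack it'' is not a proof, and the tools you name are not obviously sufficient: \eqref{eq:dif} only controls $C_{\gamma'(s)}$ for the tangent of the geodesic itself, not the transport of an independent section of $\Delta$, and $\nabla_{T'}A=0$ by itself says nothing about $\nabla_T T_0$. The step is in fact easy once one invokes the second standard differential identity for the splitting tensor along nullity directions, $\nabla_T C_S - C_{\nabla_T S} = C_S C_T$ for $T,S\in\Gamma(\Delta)$ (see \cite{Da}): taking $S=T'$ with $C_{T'}\equiv 0$ gives $C_{\nabla_T T'}=0$, i.e.\ $\nabla_T T'\in C_0$, which is exactly the missing parallelism. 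As written, however, the proof of the second assertion of the lemma is incomplete.
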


\begin{proof}
See Lemmas 1.9 and 1.10 in \cite{DaGr}.
\vspace{2ex}\qed
\end{proof} 

If $f\colon M^n\to \R^{n+1}$ has rank $2$, we have from \eqref{B1} that $\rank B\leq 2$ and $\Delta\subset\ker B$. Then \eqref{B2} implies that
\begin{equation}\label{equation for B and C}
\nabla_T B=BC_T=C_T' B
\end{equation}
for any $T\in\Gamma(\Delta)$. 

\begin{lemma}\po\label{equation for B}
Let $f\colon M^n\to \R^{n+1}$ be an isometric immersion and let $U\subset M^n$ be an open subset where $f$ has rank $2$. Let
$\tau$ be an infinitesimal bending not trivial on any open subset of $U$. If
$\gamma\colon [0,a]\to M^n$ is a unit speed geodesic such that $\gamma([0,a))\subset U$ is contained in a leaf of the relative nullity foliation, then $B$ satisfies 
\begin{equation}\label{equation B extended}
\nabla_{\gamma'(s)}B=B\bar{C}_{\gamma'(s)}
\end{equation} on $[0,a]$ where $\bar{C}_{\gamma'(s)}$ was given by Lemma~\ref{lema eq}.
\end{lemma}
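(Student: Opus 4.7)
The plan is to derive \eqref{equation B extended} on the open part $[0,a)$ from \eqref{equation for B and C} directly, and then propagate it to the endpoint $s=a$ by a continuity argument, using that $B$ is globally smooth on $M^n$ and that $\bar{C}_{\gamma'}$ is a smooth extension up to $s=a$ by Lemma~\ref{lema eq}.

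First I would observe that, since $\tau$ is smooth, the variation $f_t(x)=f(x)+t\tau(x)$ is jointly smooth, so the associated unit normals $N(t)$ and shape operators $A(t)$ depend smoothly on $t$ as well, and hence $B=\partial/\partial t|_{t=0}A(t)$ is a smooth $(1,1)$-tensor on all of $M^n$. In particular, along the whole closed interval $[0,a]$ the maps $s\mapsto B(\gamma(s))$ and $s\mapsto\nabla_{\gamma'(s)}B$ are smooth. Next, for $s\in[0,a)$ the point $\gamma(s)$ lies in $U$, where $f$ has rank $2$ and the splitting tensor is defined; since $\gamma'(s)\in\Delta(\gamma(s))$, equation \eqref{equation for B and C} applied with $T=\gamma'(s)$ gives
$$
\nabla_{\gamma'(s)}B=BC_{\gamma'(s)}.
$$
By Lemma~\ref{lema eq} we have $\bar{C}_{\gamma'(s)}=C_{\gamma'(s)}$ on $[0,a)$, so the identity $\nabla_{\gamma'(s)}B=B\bar{C}_{\gamma'(s)}$ already holds throughout $[0,a)$.

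It remains to pass to the limit $s\to a$. Both sides of this identity are smooth, hence continuous, functions of $s$ on the whole closed interval $[0,a]$: the left-hand side by smoothness of $B$ on $M^n$, and the right-hand side by the smoothness of $B$ together with the smooth extension $\bar{C}_{\gamma'}$ provided by Lemma~\ref{lema eq}. Since they coincide on the dense subset $[0,a)$, they must coincide at $s=a$ as well, which establishes \eqref{equation B extended} on $[0,a]$. There is no serious obstacle: the only way the argument could fail would be a loss of regularity of $B$ or of the splitting tensor at the endpoint, but global smoothness of $B$ and the content of Lemma~\ref{lema eq} rule both out. (Equivalently, one could note that the linear ODE $\nabla_{\gamma'(s)}X=X\bar{C}_{\gamma'(s)}$ has smooth coefficients on $[0,a]$, so its unique solution with initial value $B(\gamma(0))$ extends smoothly to $[0,a]$; this solution agrees with $B\circ\gamma$ on $[0,a)$ by uniqueness, and hence with $B\circ\gamma$ on $[0,a]$ by continuity.)
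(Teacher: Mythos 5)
Your proof is correct, but it takes a genuinely different and more elementary route than the paper. The paper's proof works with the whole associated variation: it shows that for small $t$ the immersions $f_t$ have the same relative nullity distribution as $f$ along $\gamma$ and that $f_t\circ\gamma$ is again a geodesic, applies Lemma~\ref{lema eq} to each $f_t$ to get $\nabla^t_{\gamma'}A(t)=A(t)\bar C^t_{\gamma'}$ on all of $[0,a]$, proves $\partial/\partial t|_{t=0}\bar C^t_{\gamma'}=0$, and then differentiates at $t=0$. It is precisely in controlling the rank of $A(t)$ (via Beez--Killing and Lemma~\ref{rango3}) that the hypothesis that $\tau$ is nowhere trivial on $U$ is used. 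You instead observe that \eqref{equation for B and C} already gives the identity pointwise on $[0,a)$, since $\nabla_TB$ and $C_T$ are tensorial in $T$ and $\gamma'(s)\in\Delta(\gamma(s))$ there, and then pass to the endpoint by continuity: $B$ is a globally defined smooth tensor, $\bar C_{\gamma'}$ is smooth on $[0,a]$ by Lemma~\ref{lema eq}, and $\Delta^\perp$ is parallel along $\gamma$ with $\nu(\gamma(a))=\nu_0$, so both sides of \eqref{equation B extended} are continuous on $[0,a]$ and agree on the dense subset $[0,a)$. This is sound, shorter, and --- worth noting --- never invokes the non-triviality hypothesis, which in your reading is only an artifact of the paper's method; the price is that you do not obtain the side information the paper's argument produces along the way (e.g.\ $\Delta_t=\Delta$ and that $f_t\circ\gamma$ remains a line), which is not needed elsewhere. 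One small point to make explicit if you write this up: the smoothness of $B$ up to and including $\gamma(a)$ comes from its global definition in \eqref{def B} as a derivative of the smoothly varying shape operators $A(t)$, not from any structure of the relative nullity foliation at the endpoint.
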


\begin{proof} We claim that the relative nullity distribution $\Delta_t$ of the immersion $f_t$ in \eqref{variation} satisfies $\Delta_t=\Delta_0=\Delta$ along $\gamma$ in $U$.
To see this, it suffices to argue for small values of $t$. Thus, the second fundamental form 
$A(t)$ of $f_t$ has rank at least $2$ in a neighborhood of $\gamma([0,a])$. If the rank of $A(t)$ is larger than $2$ on an open subset, then 
the Beez-Killing rigidity theorem and Lemma~\ref{rango3} yield that $\tau$ is trivial on that open subset, and that has been excluded in $U$.   
Therefore, for $t$ small enough we conclude that $A(t)$ has constant rank $2$ on a neighborhood of $\gamma([0,a))$ in $U$. 

The normal vector field $N(t)$ of $f_t$ decomposes as 
$$
N(t)=Z(t)+b N
$$ 
where $Z(t)\in f_{*}TM$ and $b=b(t,x)=\<N(t),N\>$.
At any $x\in M^n$, we easily obtain using \eqref{infben} and \eqref{relacion l con el normal} that
$$
0=\< N(t), f_{t*}X\>=\< (Id-tL_0)Z(t)-tb\xi,f_{*}X\>
$$
for any $X\in\mathfrak{X}(M)$, where $L_0=\pi\circ L$ and $\pi\colon f^{*}T\R^{n+1}\to f_{*}TM$ is the orthogonal 
projection. From \eqref{derivada del normal} we have 
\begin{equation}\label{ecu Z}
(Id-tL_0)Z(t)=tb\xi,
\end{equation}
and thus
\begin{equation}\label{ecu Z 2}
Z(t)=tbS(t)\xi
\end{equation}
where  $S(t)=\left(Id-tL_0\right)^{-1}$. 

We know that $\Delta\subset\ker B$ on $U$. Since $\pi$ is parallel along the relative nullity leaves, 
we obtain from \eqref{derivada de L}  that 
$$
\tilde{\nabla}_T L_0=\tilde{\nabla}_T(\pi\circ L)=0
$$
and from \eqref{derivada de xi} that
$$
\tilde{\nabla}_T\xi=0
$$
for any $T\in\Gamma(\Delta)$ on $U$. Hence, taking the covariant derivative of \eqref{ecu  Z} 
in the direction of $T\in\Gamma(\Delta)$ we have 
$$
\tilde{\nabla}_{T}Z(t)=tT(b)S(t)\xi.
$$
We obtain from \eqref{ecu Z 2} that 
$$
\tilde{\nabla}_{T}N(t)=T(b)\left(tS(t)\xi+N\right)=\frac{T(b)}{b}N(t).
$$
Since $N(t)$ is a unitary vector field, then $\tilde{\nabla}_{T}N(t)=0$
and the claim follows.

It follows from \eqref{derivada de L} that $L$ is parallel along the relative nullity leaves on $U$. Notice that $f\circ\gamma$ describes a line segment. Then, we have
$$
\frac{\tilde{D}}{ds} (f_{t*}\gamma')=\frac{\tilde{D}}{ds}(f_*\gamma'+tL\gamma')=0
$$
where $\tilde{D}/ds$ is the covariant derivative along $f_t\circ\gamma$ in $\R^{n+1}$. We conclude that $f_{t}\circ\gamma$ is a
geodesic in $\R^{n+1}$. 

From Lemma~\ref{lema eq} we see that
\begin{equation}\label{At equation}
\nabla^t_{\gamma'(s)}A(t)=A(t)\bar{C}_{\gamma'(s)}^t
\end{equation}
for any $s\in[0,a]$. Let $\pi_t$ be the orthogonal projection $\pi_t\colon TM\to \Delta_t^{\perp}$ with respect to the metric induced by $f_t$. 
Extend ${C}_{\gamma'(s)}^t$ to $T_{\gamma(s)}M$ as
$$
C_{\gamma'(s)}^tX=-\pi_t\nabla^t_{X}\gamma'(s)
$$
for $s\in[0,a)$. Define $\bar{C}_{\gamma'(a)}^t T=0$ for $T\in\Delta(a)$.
Fix $x=\gamma(s)\in U$ and $X\in T_xM$. Since the immersions $f_t$ and $f_{-t}$ induce the same metric we have that $\pi_{t}=\pi_{-t}$ on $T_xM$. 
Therefore $\pi_tX$ is an even function of $t$ from an interval $(-\epsilon,\epsilon)$ into $T_xM$. Hence its derivative at $t=0$ vanishes.
From this and \eqref{derivada conex}, we obtain that the linear operators $C_{\gamma'(s)}^t$ on $T_xM$ satisfy
\begin{align*}
\partial/\partial t|_{t=0}C_{\gamma'(s)}^tX&=-\partial/\partial t|_{t=0}(\pi_t\nabla_{X}^t\gamma'(s))\\
&=-\partial/\partial t|_{t=0}\pi_t\nabla_X\gamma'(s)-\pi_0\partial/\partial t|_{t=0}\nabla_X^t\gamma'(s)\\
&=0
\end{align*}
for any $s\in[0,a)$. It follows that $\partial/\partial t|_{t=0}\bar{C}_{\gamma'(a)}^t=0$. 
Hence, taking the derivative of \eqref{At equation} with respect to $t$ at $t=0$ gives 
\eqref{equation B extended}.\vspace{2ex}\qed
\end{proof}

Next we discuss several facts about ruled Euclidean hypersurfaces. 
\vspace{1ex}

Let $f\colon M^n\to \R^{n+1}$ be a ruled hypersurface and let
$c\colon I\to M^n$ be a unit speed curve orthogonal to the rulings. The rulings form an affine vector bundle 
over $\tilde c=f\circ c$ in $\R^{n+1}$. Then let $T_i(s)$, $1\leq i\leq n-1$, be orthonormal tangent fields on the corresponding bundle along $c$ which are parallel with respect to the induced connection. Set $f_*c'=\tilde{T}_0$, $\tilde{T}_i=f_*T_i$ and 
let $N$ be  a unit vector field along $c$ normal to $f$. We have
\begin{equation*}
\begin{cases}
\tilde{\nabla}_{\partial/\partial s}\tilde{T_0}=-\sum_{i=1}^{n-1}\varphi_i \tilde{T}_i+\theta N
\vspace{1ex}\\
\tilde{\nabla}_{\partial/\partial s}\tilde{T_i}=\varphi_i\tilde{T}_0+\beta_iN
\end{cases}
\end{equation*}
where $\theta=\<AT_0,T_0\>$, $\varphi_i=\<\nabla_{T_0}T_i,T_0\>$ and $\beta_i=\<AT_i,T_0\>$.

We parametrize a neighborhood of $\tilde{c}$ in $f(M)$ by means of $\tilde{f}\colon W\subset I\times\R^{n-1}\to\R^{n+1}$ given by
\begin{equation} \label{parametrization ruled}
\tilde{f}(s,u_1,\ldots,u_{n-1})=\tilde{c}(s)+\sum_{i=1}^{n-1} u_i\tilde{T}_i(s).
\end{equation}
We have at $(s,u_1,\ldots,u_{n-1})$ that
$$
\tilde{f}_*\partial/\partial s=(1+\sum_iu_i\varphi_i)\tilde{T}_0+\sum_iu_i\beta_iN.
$$
Therefore, the map $\tilde{f}$ has maximal rank if and only if 
$$
|\tilde{f}_*\partial/\partial s|^2=(1+\sum_iu_i\varphi_i)^2+(\sum_iu_i\beta_i)^2\neq 0.
$$
Note that the directions for which $\sum_iu_i\beta_i=0$ are in the relative nullity of $f$ at $c(s)$.
\medskip

The following result can be found in \cite{DaGr} for submanifolds of arbitrary codimension. 
But for the convenience of the reader we give a proof in the hypersurface case.

\begin{lemma}\po \label{regladas 2}
Let $f\colon M^n\to\R^{n+1}$ be an isometric immersion and let $U\subset M^n$ be an open subset where $f$ has constant rank $2$. Assume that $f|_U$ is ruled and has complete relative nullity leaves. Suppose that $\delta\colon [0,a]\to M^n$ is a unit speed geodesic orthogonal to $\Delta$ such that $\delta[0,a)\subset U$ is contained on a ruling. Then the rank of $f$ at $\delta(a)$ is $2$. 
Moreover, every point in $U$ has a neighborhood $V$ such that $f|_V$ extends to a ruled strip of constant rank 2. 
\end{lemma}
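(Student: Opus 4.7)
The plan is to use the parametrization~\eqref{parametrization ruled} adapted to the geodesic $\delta$, identify $\delta(a)$ in parameter space, and show the parametrization remains regular there thanks to the rank-two ruled structure. I would take a curve $c\colon I\to U$ orthogonal to the rulings with $c(s_0)=\delta(0)$, and choose an orthonormal frame $\{T_i(s_0)\}_{i=1}^{n-1}$ of the ruling at $\delta(0)$ with $T_1(s_0),\dots,T_{n-2}(s_0)\in\Delta$ and $T_{n-1}(s_0)=\delta'(0)$, extending the $T_i$ along $c$ by parallel transport in the ruling bundle. Since each ruling is part of a totally geodesic affine subspace of $\R^{n+1}$, the image $f\circ\delta$ is a line segment in $\R^{n+1}$, so $\delta(r)=\tilde f(s_0,0,\dots,0,r)$ for every $r\in[0,a]$ and $\delta(a)$ corresponds to the parameter value $u^*=(0,\dots,0,a)$.

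The key observation is that $\beta_{n-1}(s_0)\neq 0$. Since the rulings of $f$ lie in affine subspaces of $\R^{n+1}$, the scalar second fundamental form of $f$ vanishes on vectors tangent to any ruling; in particular $\langle Av,v\rangle=0$ where $v\in\Delta^\perp$ is the ruling direction (the unit vector in $\Delta^\perp$ tangent to the ruling). The matrix of $A|_{\Delta^\perp}$ in the orthonormal basis $\{T_0,v\}$ thus has the form $\bigl(\begin{smallmatrix}\theta & \lambda\\ \lambda & 0\end{smallmatrix}\bigr)$ with $\lambda=\langle Av,T_0\rangle$, and the rank-two hypothesis forces $\lambda\neq 0$. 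By our choice of frame $\beta_{n-1}(s_0)=\lambda(s_0)\neq 0$, so the regularity condition $(1+\sum u_i\varphi_i(s_0))^2+(\sum u_i\beta_i(s_0))^2>0$ is satisfied at $u=u^*$, the second summand alone being $(a\lambda(s_0))^2$. Hence $\tilde f$ is an immersion at $(s_0,u^*)$ and embeds a neighborhood $\tilde V$ onto a smooth hypersurface $\Sigma\subset\R^{n+1}$; using $\beta_i(s_0)=0$ for $i<n-1$, a direct computation from the parametrization shows that the second fundamental form of $\Sigma$ at $\tilde f(s_0,u^*)$ has rank two.

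Next I would show that $\Sigma$ coincides with $f(M)$ in a neighborhood of $f(\delta(a))$, from which it follows that the rank of $f$ at $\delta(a)$ equals the rank of $\Sigma$ there, namely $2$. Pick a neighborhood $W$ of $\delta(a)$ in $M$ on which $f|_W$ is an embedding. Both $\Sigma$ and $f(W)$ are smooth embedded hypersurfaces through $f(\delta(a))$ with a common tangent plane $\Pi$ there, by continuity of tangent planes along $f(\delta([0,a)))$, where $\delta(r)\in U$ and the two hypersurfaces already agree on an open neighborhood of each $f(\delta(r))$. Writing both hypersurfaces locally as graphs $g_1,g_2\colon\Pi\to\R$ over $\Pi$, the agreement of $\Sigma$ with $f(M)$ near each $f(\delta(r))$ translates into $g_1=g_2$ on an open neighborhood of the projection of $f(\delta(r))$ onto $\Pi$. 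A uniform lower bound on the radii of these neighborhoods, provided by the inverse function theorem applied to $f$ at $\delta(a)$ together with continuity of tangent planes, forces $g_1=g_2$ in a full neighborhood of the origin in $\Pi$, yielding the desired coincidence.

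The second conclusion follows by applying the first part to every ruling through a sufficiently small neighborhood of a given $p\in U$: each such ruling can be extended as far as its corresponding geodesic permits while preserving rank two, and the union of these extensions is a ruled strip of constant rank two containing $p$. The main technical obstacle is the uniform-neighborhood argument in the third paragraph, which requires careful use of the inverse function theorem to control the radius of the local inverse of $f$ along the sequence $\delta(r)\to\delta(a)$.
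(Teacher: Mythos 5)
Your first two paragraphs match the paper's opening move: the parametrization \eqref{parametrization ruled} along $\delta$, the observation that $\beta_{n-1}\neq 0$ because the rank-two hypothesis forces the off-diagonal entry of $A$ in the ruling/transversal frame to be nonzero, and the resulting regularity of $\tilde f$ at $(s_0,u^*)$. But your third paragraph has a genuine gap. The ``uniform lower bound on the radii'' of the neighborhoods where $\Sigma$ and $f(M)$ coincide is not supplied by the inverse function theorem applied to $f$ at $\delta(a)$: that theorem controls where $f$ is an embedding, not where $\tilde f(s,u)$ lands in $f(M)$. The latter is governed by how far the rulings through the nearby base points $c(s)$, $s\neq s_0$, extend inside $U$, and these can stop short of parameter $a$ (think of $U$ narrowing toward $\delta(a)$), so the coincidence neighborhoods may shrink to zero as $r\to a$; indeed the stronger claim that $\Sigma$ coincides with $f(M)$ near $f(\delta(a))$ can fail. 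Fortunately you do not need it: for $r<a$ the quantity $\<\tilde\nabla_{\delta'(r)}\tilde f_*\partial/\partial s,\tilde N\>=\beta_{n-1}(1+r\varphi_{n-1})^{-1}$ computed from the parametrization is (up to normalizing $\tilde N$) an entry of the second fundamental form of $f$ at $\delta(r)$, and since $f$ is smooth on all of $M^n$ this entry, together with $\<A\delta',\delta'\>=0$, passes to the limit $r=a$; lower semicontinuity of the rank gives the reverse inequality. This pointwise continuity along $\delta$ is what the paper uses, and it sidesteps your graph argument entirely.

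The more serious gap is the second conclusion, which you dispatch in one sentence. A ruled strip must have \emph{complete} rulings, so one has to extend $\tilde f$ by the same formula to all of $I\times\R^{n-1}$ and prove that the extension has no singular points; extending each ruling ``as far as its corresponding geodesic permits'' does not produce complete rulings ($M^n$ is not assumed complete in this lemma) and says nothing about regularity of the extension. The singular set of $\tilde f$ is where $(1+\sum_iu_i\varphi_i)^2+(\sum_iu_i\beta_i)^2=0$, and since $\sum_iu_i\beta_i=0$ exactly for relative nullity directions, the whole point is to show $\sum_iu_i\varphi_i=\<\nabla_{T_0}T,T_0\>=-\<C_TT_0,T_0\>=0$ for every $T=\sum_iu_iT_i\in\Delta(c(s))$. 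This is where the completeness of the relative nullity leaves enters, via Lemma~\ref{lema autovalor}: $C_T$ has only $0$ as a real eigenvalue and kills the ruling direction orthogonal to $\Delta$, forcing $\<C_TT_0,T_0\>=0$. One then still needs Lemma~\ref{lema eq} plus the first part to see that the rank of the extension is $2$ everywhere. Your proposal never invokes the completeness hypothesis, which is the decisive ingredient here, so the second statement is not actually proved.
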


\begin{proof} Let $W\subset I\times \R^{n-1}$ be an open subset where the parametrization \eqref{parametrization ruled} is defined and 
write $W_s=W\cap \{s\}\times \R^{n-1}$. Assume that the geodesic $\delta$ is contained on the ruling determined by $\tilde{f}|_{W_{s}}$ and has $T_{n-1}$ as its tangent vector field. Notice that $\tilde{f}(s,0,\ldots,0,r)$ is a parametrization of $\delta$. Since $\beta_{n-1}(s)\neq 0$ the map $\tilde{f}$ has maximal rank along $\delta$ and at $\delta(r)$ we have 
$$
\tilde{f}_* (\partial/\partial s)= (1+r\varphi_{n-1})\tilde{T}_0+r\beta_{n-1}N.
$$
Let $\tilde{N}(\delta(r))=\alpha(r)\tilde{T}_0+N$ be a vector field normal to $f$ along $\delta$ (not necessarily unitary). Then
$$
0=\<\tilde{f}_* (\partial/\partial s),\alpha\tilde{T}_0+N\>=\alpha(1+r\varphi_{n-1})+r\beta_{n-1}.
$$
Taking $r\in (0,a]$ we see that $1+r\varphi_{n-1}\neq 0$, then
$$
\alpha(r)=-\frac{r\beta_{n-1}}{(1+r\varphi_{n-1})}\cdot
$$
Therefore, we have that
\begin{align*}
\<\tilde{\nabla}_{\delta'(r)}\tilde{f}_* (\partial/\partial s), \tilde{N}(\delta(r))\>&=\<\varphi_{n-1}\tilde{T}_0+\beta_{n-1}N, \alpha(r)\tilde{T}_{0}+N\>\\
&=\beta_{n-1}\left(\frac{1}{1+r\varphi_{n-1}}\right)
\end{align*}
which does not vanish.
Thus the rank of $\tilde{f}$ at $\delta(a)$ is $2$ and hence the same holds for $f$. 

It remains to prove that $f|_U$ extends locally to a ruled strip. Fix $x\in U$ and let $V\subset U$ be a neighborhood of $x$ parametrized by \eqref{parametrization ruled}. Extend $\tilde{f}$ to $I\times\R^{n-1}$ with the same expression. We claim that this extension defines a ruled strip of constant rank 2. 
We first prove that $\tilde{f}$ has no singular points. As seen previously, $\tilde{f}$ is singular at points where
$$
(1+\sum_iu_i\varphi_i)^2+(\sum_iu_i\beta_i)^2=0.
$$
Then, it suffices to prove that $\sum_{i}u_i\varphi_i=0$ for any $T=\sum_iu_iT_i(s)\in \Delta(c(s))$.

Given $T\in\Delta(c(s))$, we have that
$$
\sum_iu_i\varphi_i=\<\nabla_{T_0}T,T_0\>=-\<C_TT_0,T_0\>.
$$
If the splitting tensor vanishes there is nothing to prove. 
Otherwise, if $X$ is a unit vector field on $V$ tangent to a ruling and orthogonal to the relative nullity, it follows from the completeness of the relative nullity leaves of $f$ that $C_TX=0$ for any $T\in\Gamma(\Delta)$.
Finally, since the only real eigenvalue of $C_T$ is zero by  Lemma~\ref{lema autovalor}, then
$\<C_TT_0,T_0\>=0$,
and thus $\tilde{f}$ has no singular points.

It follows from Lemma~\ref{lema eq} that the open subset where $\tilde{f}$ has rank two is a union of complete relative nullity leaves. From the previous discussion we have that the rank of $\tilde{f}$ along any ruling  is two, and the claim follows.\qed
\end{proof}

\begin{lemma}\po\label{B para reglada}
Let $f\colon M^n\to \R^{n+1}$ be a ruled hypersurface of constant rank $2$ with complete relative nullity leaves. 
Assume that the splitting tensor $C$ does not vanish on any open subset. If $\tau$ is an infinitesimal bending of $f$, then its associated symmetric tensor $B$ satisfies 
\begin{equation}\label{forma de B reglada}
B|_{\Delta^\perp}=\begin{bmatrix}\theta & 0\\ 0 & 0
  \end{bmatrix}
\end{equation}
with respect to a local orthonormal basis $\{Y,X\}$ of $\Delta^{\perp}$ such that $Y$ is orthogonal to the rulings. Moreover, the smooth function $\theta$ verifies 
\begin{equation}\label{ecu reglada}
X(\theta)=\< \nabla_{Y}Y,X \>\theta.
\end{equation} 
\end{lemma}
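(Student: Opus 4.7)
First I would fix a local orthonormal frame $\{Y, X\}$ of the rank-$2$ subbundle $\Delta^{\perp}$, with $X$ the (up-to-sign unique) unit direction tangent to the rulings and $Y$ orthogonal to them. Because each ruling is an affine line in the $X$-direction, $\tilde{\nabla}_X X = 0$ in $\R^{n+1}$, which yields both $\nabla_X X = 0$ and $\langle AX, X\rangle = 0$. Combining $\mathrm{Im}(A) \subset \Delta^{\perp}$ with $\mathrm{rank}\, A = 2$ then forces $AX = \beta Y$ with $\beta$ nowhere zero, and $AY = \alpha Y + \beta X$ where $\alpha = \langle AY, Y\rangle$. Next I would pin down $C_T|_{\Delta^{\perp}}$: from the proof of Lemma~\ref{regladas 2}, completeness of the relative nullity leaves gives $C_T X = 0$ for every $T \in \Gamma(\Delta)$, so $X$ is a zero-eigenvector of the $2\times 2$ operator $C_T|_{\Delta^{\perp}}$. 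Its characteristic polynomial thus factors as $\lambda(\lambda - \mathrm{tr}(C_T|_{\Delta^{\perp}}))$, has two real roots, and Lemma~\ref{lema autovalor} forbids nonzero real eigenvalues, so the trace vanishes and $C_T|_{\Delta^{\perp}} = \begin{bmatrix} 0 & 0 \\ b(T) & 0 \end{bmatrix}$ in the basis $\{Y, X\}$.

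To extract the block form of $B$, I would write $B|_{\Delta^{\perp}} = \begin{bmatrix} \theta & \mu \\ \mu & \nu \end{bmatrix}$ in this basis. Expanding the identity $BC_T = C_T' B$ from \eqref{equation for B and C} collapses to the single constraint $b(T)\nu = 0$ for every $T \in \Gamma(\Delta)$; since by hypothesis $C$ is not identically zero on any open subset, $b(T)$ is nonzero on a dense subset, and continuity forces $\nu \equiv 0$. Plugging $\nu = 0$, $AX = \beta Y$ and $AY = \alpha Y + \beta X$ into the algebraic identity \eqref{B1} evaluated at $(X,Y)$ then yields $2\mu\beta\,(Y \wedge X) = 0$, and since $\beta$ never vanishes, $\mu = 0$. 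Hence $BY = \theta Y$ and $BX = 0$, which is \eqref{forma de B reglada}.

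Finally, for \eqref{ecu reglada} I would substitute this form of $B$ into the Codazzi identity \eqref{B2} taken at $(X,Y)$: after cancellation, the difference $(\nabla_X B)Y - (\nabla_Y B)X$ equals $X(\theta)Y + \theta\nabla_X Y - B[X,Y]$. Projecting onto $Y$, the $\theta\nabla_X Y$ contribution vanishes since $\langle \nabla_X Y, Y\rangle = 0$, and $\langle B[X,Y], Y\rangle = \theta\langle [X,Y], Y\rangle = \theta\langle\nabla_Y Y, X\rangle$ via the standard identity $\langle[X, Y], Y\rangle = \langle\nabla_Y Y, X\rangle$, leaving precisely $X(\theta) = \langle \nabla_Y Y, X\rangle\,\theta$. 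The delicate step of the whole argument is the combined use of $C_T X = 0$ with Lemma~\ref{lema autovalor} to force the trace of $C_T|_{\Delta^{\perp}}$ to vanish; the residual projections of \eqref{B2} onto $X$ and onto $\Delta$ impose only the compatibility conditions $\theta\langle\nabla_X Y, X\rangle = 0$ and $\theta\langle\nabla_X Y, T\rangle = 0$ for $T \in \Gamma(\Delta)$, the former being automatic from $\nabla_X X = 0$ and the latter consistent with the Codazzi equation for $A$ along the rulings.
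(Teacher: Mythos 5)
Your proof is correct and follows essentially the same route as the paper: both arguments use $C_TX=0$ together with Lemma~\ref{lema autovalor} to get $C_T|_{\Delta^\perp}=b\,J$ with $JX=0$, $JY=X$, then combine the commutation relation $BC_T=C_T'B$ from \eqref{equation for B and C} with \eqref{B1} and the ruled form of $A$ to kill the off-diagonal and $XX$-entries of $B$, and finally read \eqref{ecu reglada} off the $Y$-component of \eqref{B2} (the paper merely packages the middle step as the statement that $D=A^{-1}B$ commutes with $J$). One small imprecision: the rulings are $(n-1)$-dimensional affine subspaces rather than lines in the $X$-direction, so $\tilde\nabla_XX=0$ is not justified; total geodesy of the rulings only gives that $\tilde\nabla_XX$ is tangent to the ruling, i.e. $\langle AX,X\rangle=0$ and $\langle\nabla_XX,Y\rangle=0$, while a $\Delta$-component of $\nabla_XX$ may survive --- but these weaker facts are all your argument actually uses, so nothing breaks.
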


\begin{proof}
On the open dense subset where $C\neq 0$, let $T\in\Gamma(C_0^{\perp})$ be unitary. Locally take $X,Y\in \Gamma(\Delta^{\perp})$ orthonormal such that $Y$ is orthogonal to the rulings. 
We have seen that $X\in\Gamma(\ker C_T$). Moreover, Lemma~\ref{lema autovalor} implies that $C_T=\mu J$ for some smooth function $\mu$, where $J\in\Gamma(\End(\Delta^{\perp}))$ is defined by $JX=0$ and $JY=X$. 

We denote the restrictions of $A$ and $B$ to $\Delta^\perp$ by the same letters and let $D\in\Gamma(\End(\Delta^{\perp}))$ be given by 
$D=A^{-1}B.$ 
From \eqref{A1} and \eqref{equation for B and C} we have 
$$
ADC_T=C_T'AD=AC_TD.
$$
Hence $A[D,C_T]=0$, and thus $D$ commutes with $J$. This gives
$D=\phi_1 Id+\phi_2J$
and 
$$
B=\phi_1 A+\phi_2 AJ.
$$
Since the immersion is ruled, then $A$ has the form
$$
A=\begin{bmatrix}\lambda & \nu\\ \nu & 0
  \end{bmatrix}.
$$
We easily have from \eqref{B1} that $\phi_1=0$, and therefore $B$ has the form \eqref{forma de B reglada}.
Finally \eqref{ecu reglada} follows from \eqref{B2}.\qed
\end{proof}

\begin{remark}\po{\em From the above and Theorem 13 in \cite{DaVl}, the set of infinitesimal bendings of a ruled hypersurface satisfying the assumptions of the preceding result, is in one to one correspondence with the set of smooth functions on an interval.}
\end{remark}

\section{The proof}

We are now in condition to give the proof of Theorem~\ref{main}.
\vspace{1ex}

Let $\tau$ be a non-trivial infinitesimal bending of $f$ and let $B$ be the associated symmetric 
tensor given by \eqref{def B}.  We consider the subsets of $M^n$ defined by
$$
M_{i}=\{x\in M^n : \rank A(x)\geq i\}.
$$ 
Then $B|_{M_{3}}=0$ since $f$ is infinitesimally rigid on $M_{3}$.
Let $V\subset W_{2}=M_{2}\setminus \bar{M_{3}}$ be the open subset of $M^n$ defined by 
$$
V=\left\{x\in W_{2} : B(x)\neq 0\right\}.
$$ 

We claim that the leaves of relative nullity in $V$ are complete. Otherwise,
there is a geodesic $\gamma\colon [0,a]\to M^n$ contained in a leaf of the relative nullity such that 
$\gamma(\left[0,a\right))\subset V$ and $\gamma(a)\notin V$. 
From Lemma~\ref{equation for B} we have that $B$ satisfies \eqref{equation B extended} on $\left[0,a\right]$ with $B(a)=0$. Moreover, since $\nabla_{\gamma'(s)}B$ is a symmetric tensor we have that
\begin{equation}\label{ecu B extendida 2}
\nabla_{\gamma'(s)}B=\bar{C}_{\gamma'(s)}'B
\end{equation}
on $[0,a]$.
Take a parallel orthonormal basis of $\Delta^{\perp}$ along $\gamma$ and regard \eqref{ecu B extendida 2} as a differential equation of matrices. 
It follows from Jacobi's formula that
$$
\frac{d}{d s}(\det B(s))=\trace\left(\mbox{adj}(B)\,\frac{d B}{d s}\right).
$$
Thus
\begin{equation*}
\det B(s)=e^{\int_{0}^s\trace \bar{C}_{\gamma'(r)}'dr}\det B(0)
\end{equation*}
on $[0,a]$. If $B(0)$ has rank $2$ we already have a contradiction.
If $B(0)$ has rank $1$, we may assume that its first column is not zero and replace the remaining column so
the resulting matrix $\tilde B(0)$ has rank $2$. Taking $\tilde{B}(0)$ as a new initial condition for \eqref{ecu B extendida 2}, the corresponding solution $\tilde{B}(s)$ has maximal rank for any $s\in[0,a]$.
Notice that each column $w(s)$ of $B(s)$ verifies
\begin{equation*}
\frac{d}{ds}w(s)=\bar{C}_{\gamma'(s)}'w(s).
\end{equation*}
Since the first columns of $\tilde{B}(0)$ and $B(0)$ coincide the same holds for the first columns of $\tilde{B}(s)$ and $B(s)$. This again leads to a contradiction, and proves the claim.

We show next that $f|_V$ is ruled using arguments from the proof of Proposition 2.1 in \cite{DaGr}. By Lemma~\ref{cokerC} the codimension of $C_0$ is at most one. The assumption that $f(M)$ does not contain a cylinder gives that the subset 
$$
V_0=\{x\in V : C(x)=0\}
$$ 
has empty interior. 
Let $T\in\Gamma(\Delta)$ be a local unit vector field on the open subset $V_1=V\setminus V_0$
spanning the orthogonal complement of $C_0$. Using again Lemma~\ref{cokerC}  it follows that $\rank C_T=1$. Moreover, 
we have from (\ref{eq:dif}) that $V_1$ and $V_0$ are both union of complete relative nullity leaves.  

We claim that the smooth distribution $\Delta\oplus\ker C_{T}$ on $V_1$ is totally geodesic. If $\ker C_{T}$ is locally spanned by a unit 
vector field $X$, then $(\nabla_{X}T)_{\Delta^{\perp}}=0$. From Lemma~\ref{lema autovalor} we have that $\nabla_{T}X=0$. 
Since $\Delta$ is totally geodesic, then  $\Delta\oplus\ker C_{T}$ is integrable.
It remains to show that $\<\nabla_XX,Y\>=0$ where $Y\in\Gamma(\Delta^{\perp})$ is a unit vector field orthogonal to $X$.
Since the only real eigenvalue of $C_T$ is zero, then $C_{T}Y=\mu X$ for a smooth non vanishing function $\mu$.
Lemma~\ref{equation CTXY} yields
$$
(\nabla^h_{X}C_{T})Y=(\nabla^h_{Y}C_{T})X,
$$
which is equivalent to
\begin{equation}\label{equation mu}
X(\mu)=\<\nabla_{Y}Y,X\>\mu 
\end{equation}
and $$
\mu\<\nabla_{X}X,Y\>=0.
$$
The last equation proves the claim.

Since $C_{T}$ is nilpotent, we have that $\ker C_{T}'= \Ima C_{T}'$. From \eqref{A1} we have $C_{T}'AX=0$, and then 
$$
\< AX,X\>=0.
$$
Thus the leaves of $\Delta\oplus \ker C_{T}$ are totally geodesic submanifolds of $\R^{n+1}$, that is, $f|_{V_{1}}$ is ruled.

Next we prove that the rulings contained in $V_1$ are complete. 
Recall that the leaves of relative nullity in $V_1$ are complete.
Assume otherwise that there is an incomplete ruling in $V_1$. Thus, there is a geodesic $\delta\colon [0,a]\to M^n$ in the direction 
of $X$ such that $\delta(a)\notin V_1$. We have 
from Lemma~\ref{regladas 2} that the rank of $f$ at $\delta (a)$ is $2$. Moreover, from the second statement on that lemma, it follows that \eqref{equation mu} extends to $\delta(a)$ where $Y\in\Gamma(\Delta^{\perp})$ is as before. Since $\mu$ is not zero along $\delta$ we have that $\delta(a)\notin V_0$, and hence $\delta(a)\notin V$. On the other hand,
Lemma~\ref{B para reglada} yields that $B$ has the form \eqref{forma de B reglada} with respect to $\{Y,X\}$ and that $\theta\in C^\infty(M)$ verifies \eqref{ecu reglada}. Using again Lemma~\ref{regladas 2}  we see that \eqref{ecu reglada} extends smoothly to $[0,a]$ with $X=\delta'$. But then $B$ has to vanish along $\delta$, and that is a contradiction. 

Let $S$ be a connected component of $V_{1}$ and let $x\in\partial\bar{S}$ together with a sequence $x_{j}\in S$ be such that $x_{j}\to x$.
Let $L_{j}$ be the affine subspace of $\R^{n+1}$ determined by the ruling through $f(x_{j})$. Since the rulings are complete, there is an affine subspace $L$ through $f(x)$ which is the limit of the sequence determined by $L_{j}$. In fact, suppose that there are two subsequences $L'_j$ and $L''_j$ converging to different subspaces $L'$ and $L''$ that intersect at $f(x)$. 
Then, in a neighborhood of $x$ different subspaces $L'_j$ and $L''_j$  would  intersect, and this is a contradiction.
Clearly $L\subset f(\partial\bar{S})$, and thus $f|_{\bar{S}}$ is a ruled strip.

Notice that if two ruled strips have common boundary then their union is also a ruled strip.
Take $x\in V_0$. Since $V_1$ is dense in $V$, then $f(x)\in L\subset f(M)$ where $L$ is an affine $(n-1)$-dimensional
subspace of $\R^{n+1}$ that is the limit of a sequence of rulings. Suppose that there exist two sequences of rulings $L'_j$ and $L''_j$ converging to affine subspaces $L'\neq L''$ that intersect at $f(x)$. Then $L'_j$  intersects $L''$ in a hyperplane for large values of $j$. Fixing $j$ large enough, the same holds for any ruling in a neighborhood of rulings of $L'_j$. 

Let $Z'$ and $Z''$ be vector fields tangent to $L'_j$ and $L''$, respectively, and let $R$ be a vector field tangent to 
$L''\cap L'_j$. Since $\tilde{\nabla}_R Z'$ and $\tilde{\nabla}_R Z''$ have no normal components,
it follows that $L''\cap L'_j$ is a complete relative nullity leaf. The same holds for the nearby rulings.
In a neighborhood of $y\in L''\cap L'_j$, as before take unit vector fields $T\in\Gamma(C_0^\perp)$, $X\in\Gamma(\ker C_T)$
and $Y$ such that $C_T Y=\mu X$ with $\mu\neq 0$.
Let $\gamma$ be the unit speed geodesic of $M^n$ such that $f\circ \gamma$ lies in $L''$, $f(\gamma(0))=y$ and is orthogonal to $\Delta$. Then $\gamma'=aX+bY$ with $b\neq0$.
Hence $\<C_T\gamma',\gamma'\>=0$ is equivalent to $ab\mu=0$.
This yields $a=0$, and thus $\gamma'=Y$ is orthogonal to $X$. Since $f_*\nabla_{\gamma'} T$ is tangent to $L''$ and $f_*X$ is orthogonal to $L''$, then $C_T Y=0$, and this is a contradiction. Therefore, we have seen that any sequence of points in $V_1$ converging to $x$, determines the same affine subspace $L$ as the limit of the correspondent rulings. Moreover, we have shown that $L$ does not intersect $f(V_1)$.

We have proved that there exists an open neighborhood $U$ of $x$ such that $f|_U$ is ruled and has complete relative nullity leaves. 
Using Lemma~\ref{B para reglada} as above, we obtain that the affine subspace $L$ is contained in $f(V_0)$. Hence, every connected component of $V$ defines a ruled strip. 

To conclude the proof of the theorem it remains to show that 
$B=0$ on the open subset $W_1=M_1\setminus \bar{M_2}$, that is, that $B$ vanishes outside ruled strips.
It follows from \eqref{B1} that $B(\Delta)\subset \Ima A$, hence $\rank B\leq 2$ on $W_{1}$.
Let $V'$ be the open subset of $W_{1}$ defined as
$$
V'=\left\{x\in W_{1}: \rank B(x)=2\right\}.
$$
Take local orthonormal vector fields $X$ and $Y$ in $V'$ orthogonal to $\ker B$ such that $X$ is an eigenfield of $A$. 
Then $A$ and $B$ have the expressions 
$$
A|_{\ker B^{\perp}}=\begin{bmatrix}
\lambda & 0\\ 0 & 0
\end{bmatrix},\;\; B|_{\ker B^{\perp}}=\begin{bmatrix}
\mu  & b \\b & 0
\end{bmatrix}
$$
with respect to the frame $\{X,Y\}$ and $\lambda\neq 0\neq b$.

We claim that $V'$ is empty.  Suppose otherwise.
Given $T\in\Gamma(\Delta)$ let $c_{T}$  be defined by $C_{T}X=c_{T}X$. 
Since $X$ is parallel along the relative nullity leaves, we have
$$
(\nabla_{X}B)Y
=(X(b)+c_Y\mu)X+2c_Yb Y+b(\nabla_{X}X)_{\ker B}
$$
and 
$$
(\nabla_{Y}B)X=Y(\mu)X+Y(b)Y+b\nabla_{Y}Y.
$$
In particular \eqref{B2} yields
\begin{equation}\label{codB1} 
Y(b)=2c_{Y}b.
\end{equation}
Similarly, we obtain 
\begin{equation}
\begin{cases}\label{codB2}
S(\mu)=c_S\mu -b\<\nabla_{X}S,Y\>\\
S(b)=c_{S}b\\ 
\nabla_{S}Y=0
\end{cases}
\end{equation} 
for any $S\in\Gamma(\ker B)$ on $V'$.  

By the last equation in \eqref{codB2}, we have that $\ker B$ is a totally geodesic distribution 
along $V'$. We see next that its leaves are complete. On the contrary, suppose that there is a geodesic $\gamma\colon [0,a]\to M^n$ 
such that $\gamma([0,a))\subset V'$ is contained  on a leaf of $\ker B$ and that $\gamma(a)\notin V'$. 
By Lemma~\ref{lema eq} the relative nullity subspace at $\gamma(a)$ has dimension $n-1$. Then, we have one of the following possibilities:
\begin{itemize}
\item[(i)] $\gamma(a)\in W_1$ and $\rank B(\gamma(a))\leq 1$,
\item[(ii)] $\gamma(a)\in \bar{M}_2$ and $B(\gamma(a))=0$, 
\item[(iii)] $\gamma(a)\in \bar{M}_2$ and $B(\gamma(a))\neq 0$.
\end{itemize}

We first show that the latter possibility cannot occur. Assume that $(iii)$ holds and take a neighborhood of $\gamma(a)$ where $B\neq 0$. Since $\gamma(a)\in\bar{M}_2$, there is a sequence $x_k\in V$ such that $x_k\to \gamma(a)$. Recall that each connected component of $V$ defines a ruled strip. Let $L_k$ be the affine subspace of $\R^{n+1}$ given by the ruling through $f(x_k)$. As before, there is an affine subspace $L\subset f( \bar{M_2})$ of dimension $n-1$ which is the limit of the sequence $L_k$ and contains $f(\gamma(a))$. Since $A\gamma'(a)=0$ and the geodesic $f\circ\gamma$ is transversal to $L$, we have that $A(\gamma(a))=0$, and that is a contradiction.

In the two remaining cases we have that $b(\gamma(a))=0$. The second equation in \eqref{codB2} extends to $[0,a]$ implying that $b$ vanishes along $\gamma$, and this leads to a contradiction. Hence $\ker B$ has complete leaves in $V'$.

The leaves of the relative nullity foliation cannot be complete on any open subset of $W_{1}$. This follows easily from Lemma~\ref{lema autovalor} and the assumptions on $f$.
Let $W_{1}'\subset W_{1}$ be the dense subset where the the relative nullity leaves are not complete. 
Take a point $x\in V'\cap W'_1$. Since the leaf of the relative nullity foliation through $x$ is not complete, there is a geodesic $\delta\colon [0,a]\to M^n$ contained in that leaf tangent to $Y$ such that $\delta([0,a))\subset V'$ and $\delta(a)\notin V'$. By the same transversality argument as 
above we see that $b(\delta(a))=0$. It
follows from \eqref{codB1} that $b=0$ along $\delta$, and that is a contradiction proving the claim that $V'$ is empty.

We have that $\rank B\leq 1$ on $W_1$. Given $x\in W'_1$, there is a geodesic $\gamma\colon [0,a]\to M^n$ with 
$\gamma([0,a))\subset W_{1}'$ contained in a leaf of the relative nullity such that $\gamma(0)=x$ and $B(\gamma(a))=0$. 
Now the first equation in \eqref{codB2} (for $b=0$) gives that $B$ vanishes along $\gamma$. Thus $B$ vanishes on $W'_1$, and
hence on $W_1$. This concludes the proof of the theorem.

\begin{corollary}\label{corollary}\po
Let $f\colon M^n\to \R^{n+1}$ be an isometric immersion of a simply connected Riemannian manifold $M^n$ satisfying the hypothesis of Theorem~\ref{main}. If $\tau$ is a 
non-trivial infinitesimal bending of $f$, then $\tau$ is the variational field of an isometric bending.
\end{corollary}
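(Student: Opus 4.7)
\emph{Proof proposal.} The plan is to combine Theorem~\ref{main}, the integrability result for infinitesimal bendings of simply--connected ruled hypersurfaces from \cite{DaVl} (cited in the introduction), and the simple connectedness of $M^n$ to construct a globally defined isometric bending $F$ with variational field~$\tau$.

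First I would exploit the structural information from Theorem~\ref{main}: the symmetric tensor $B$ associated to $\tau$ vanishes outside the union $\Omega$ of ruled strips of $f$, so on $M^n\setminus \bar\Omega$ the field $\tau$ is trivial, of the form $\mathcal{D}f+w$ on each connected component; and on each ruled strip the hypothesis of Theorem~\ref{main} rules out the surface-like case on any open subset. Next, I would cover $M^n$ by simply--connected open sets $U_\alpha$, each either contained in a ruled strip or disjoint from $\bar\Omega$, and build a local isometric bending $F_\alpha\colon (-\epsilon_\alpha,\epsilon_\alpha)\times U_\alpha\to\R^{n+1}$ with variational field $\tau|_{U_\alpha}$. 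On the trivial pieces take $F_\alpha(t,x)=e^{t\mathcal{D}_\alpha}f(x)+tw_\alpha$; on the ruled ones, apply the \cite{DaVl} integrability theorem directly, using also the one-to-one correspondence between smooth functions on an interval and both the infinitesimal and isometric bendings of a ruled strip (cf.\ Lemma~\ref{B para reglada} and the remark that follows).

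The core of the argument, and its main obstacle, is the gluing step. On each overlap $U_\alpha\cap U_\beta$ the maps $F_\alpha$ and $F_\beta$ are isometric bendings of $f$ with common variational field, and using the uniqueness aspects of the parametrization of isometric bendings on each type of piece (together with Beez--Killing rigidity on any region of rank at least three encountered in the closure) they must differ by a smooth one--parameter family of rigid motions $R_{\alpha\beta}(t)$ of $\R^{n+1}$ with $R_{\alpha\beta}(0)=\mathrm{Id}$ and $R_{\alpha\beta}'(0)=0$. The family $\{R_{\alpha\beta}\}$ is a \v{C}ech $1$--cocycle with values in the sheaf of germs of such paths in the Euclidean isometry group.

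To finish, I would argue that this cocycle is cohomologically trivial: the stalks of the coefficient sheaf are contractible and $M^n$ is simply connected, so we may write $R_{\alpha\beta}=R_\alpha R_\beta^{-1}$ and absorb each $R_\alpha$ into $F_\alpha$ to obtain a coherent $F\colon(-\epsilon,\epsilon)\times M^n\to\R^{n+1}$. Equivalently, $F$ can be built by propagating a local bending along paths in $M^n$, path--independence being a consequence of the vanishing monodromy around contractible loops. I expect the delicate point to be the verification that the local gluing data genuinely lands in this topologically trivial sheaf, i.e.\ showing that two local isometric bendings with the same variational field do differ only by a rigid motion family of the indicated type; this is where the detailed structure of ruled strips given by the preceding lemmas, and the hypothesis excluding surface-like pieces, will be used.
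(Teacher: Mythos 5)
Your overall strategy (localize, integrate the infinitesimal bending on each piece, glue using simple connectedness) is not the one the paper uses, and the step you yourself flag as delicate is where it breaks. The claim that two local isometric bendings $F_\alpha,F_\beta$ of $f$ with the same variational field differ by a one-parameter family of rigid motions $R_{\alpha\beta}(t)$ is false on the ruled strips: by the Dajczer--Gromoll result quoted in the introduction (and the remark after Lemma~\ref{B para reglada}), the isometric deformations of a ruled strip form an infinite-dimensional family parametrized by smooth functions on an interval, so an isometric bending with prescribed variational field is determined only to first order in $t$; two of them can differ at order $t^2$ by a genuinely nontrivial deformation, and for fixed $t\neq 0$ they need not even induce the same metric on the overlap. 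Consequently the transition data $\{R_{\alpha\beta}\}$ does not take values in the sheaf of rigid-motion families, and the \v{C}ech/monodromy argument has nothing to act on. (There are secondary issues as well: a cover by sets either inside a ruled strip or disjoint from its closure misses the strip boundaries, and the trivial pieces must be handled across the regions of rank $\geq 3$.) The only way to repair the gluing is to make a canonical local choice of bending --- namely the one whose first fundamental form is $g$ for all $t$ and whose second fundamental form is $A+tB$ --- so that uniqueness in the fundamental theorem of hypersurfaces forces the transition maps to be rigid motions; but once you have made that choice the cover is superfluous, since the fundamental theorem applies globally on the simply connected $M^n$.

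That is exactly the paper's proof, and it is one paragraph long: by Theorem~\ref{main} and Lemma~\ref{B para reglada}, $B$ has rank at most one everywhere (it has the form \eqref{forma de B reglada} on the ruled strips and vanishes elsewhere), so $B\wedge B=0$; together with \eqref{B1} this gives $(A+tB)\wedge(A+tB)=A\wedge A$, i.e.\ the Gauss equation for $(g,A+tB)$, and \eqref{B2} together with \eqref{ecu reglada} gives Codazzi. The fundamental theorem of hypersurfaces on the simply connected $M^n$ then produces, for each $t$, an isometric immersion $f_t$ with these data, depending smoothly on $t$, and the resulting isometric bending has associated tensor $\partial/\partial t|_{t=0}A(t)=B$, hence variational field $\tau$ up to a trivial bending. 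You should either adopt this direct argument or, if you insist on localizing, first establish the rank-one and Codazzi properties of $B$ so as to have the canonical family $A+tB$ available --- at which point the direct argument is shorter.
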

\begin{proof}
Let $B$ be the symmetric tensor associated to the infinitesimal bending $\tau$. It is easy to see using \eqref{forma de B reglada} and \eqref{ecu reglada} that the symmetric tensors $A+tB$, $t\in\R$,  satisfy the Gauss and Codazzi equations. Then, they give rise to an isometric bending of $f$ having $\tau$ as its variational field. \qed
\end{proof}

\Address


\begin{thebibliography}{lll}
\bibitem{Ca1} E. Cartan, \emph{La déformation des hypersurfaces dans l'espace euclidien réel à n dimensions}, Bull. Soc. Math. France 44 (1916), 65-99.
\bibitem{Da} M. Dajczer, \emph{Submanifolds and isometric immersions}, Mathematics Lecture Series 13, Publish or Perish, Inc. (1990)
\bibitem{DaFlTo} M. Dajczer, L. Florit and R. Tojeiro, \emph{On deformable hypersurfaces in space forms}, Ann. Mat. Pura Appl. 174 (1998), 361-390.
\bibitem{DaGr} M. Dajczer and D. Gromoll, \emph{Rigidity of complete Euclidean hypersurfaces}, J. Diff. Geometry 31 (1990), 401-416.
\bibitem{DaRo} M. Dajczer and L. Rodríguez, \emph{Infinitesimal rigidity of Euclidean submanifolds}, Ann. Inst. Fourier 40 (1990), 939-949.
\bibitem{DaVl} M. Dajczer and Th. Vlachos, \emph{The infinitesimally bendable Euclidean hypersurfaces}, Annali di Matematica (2017). doi:10.1007/s10231-017-0641-8.
\bibitem{Sa} R. Sacksteder, \emph{The rigidity of hypersurfaces}, J. of Math. Mech. 11 (1962), 929-939.
\bibitem{Sb2} U. Sbrana, \emph{Sulla deformazione infinitesima delle ipersuperficie}, Ann. Mat. Pura Appl. 15 (1908), 329-348.
\bibitem{Sb1} U. Sbrana, \emph{Sulla varietá ad $n-1$ dimesioni deformabli nello spazio euclideo ad n dimensioni}, Rend. Circ. Mat. Palermo 27 (1909), 1-45.
\end{thebibliography}
\end{document}